 \newtheorem{thm}{Theorem}[section]
 \newtheorem{lem}[thm]{Lemma}
 \newtheorem{prop}[thm]{Proposition}
 \theoremstyle{definition}
 \newtheorem{defn}[thm]{Definition}
 \theoremstyle{remark}
 \newtheorem{rem}[thm]{Remark}
 \newtheorem*{ex}{Example}
 \numberwithin{equation}{section}
\begin{document}

%
%
%
%
%
%
%
%
%

\title[KSGNS type constructions for $\alpha $-completely positive maps]{KSGNS type constructions for $\alpha$-completely positive maps on Krein $C^*$-modules}

\author{Mohammad Sal Moslehian}
\address{Department of Pure Mathematics\\ Ferdowsi University of Mashhad\\ P.
O. Box 1159\\ Mashhad 91775\\ Iran}
\email{moslehian@um.ac.ir and moslehian@member.ams.org}
\urladdr{http://profsite.um.ac.ir/~moslehian/}

\author{Maria Joi\c{t}a}
\address{$^1$ Department of Mathematics\\ University of Bucharest\\ Bd. Regina
Elisabeta nr. 4-12\\ Bucharest\\ Romania}
\email{mjoita@fmi.unibuc.ro}
\urladdr{http://sites.google.com/a/g.unibuc.ro/maria-joita/}

\author{Un Cig Ji}
\address{Department of Mathematics\\ Research Institute of
Mathematical Finance\\ Chungbuk National University\\ Cheongju 361-763\\ Korea}
\email{uncigji@chungbuk.ac.kr}

\subjclass{Primary 46L08; Secondary 47B50, 46L05.}

\keywords{KSGNS type construction; $C^*$-algebra; Hilbert $C^*$-module; $\alpha$-completely positive map; Stinespring type theorem.}


\begin{abstract}
In this paper, we investigate $\Phi$-maps associated to a certain type of $\alpha$-completely positive maps. We then prove a KSGNS (Kasparov--Stinespring--Gel'fand--Naimark--Segal) type
theorem for $\alpha $-completely positive maps on Krein $C^*$-modules and show that the minimal KSGNS construction is unique up to
unitary equivalence. We also establish a covariant version of the KSGNS type
theorem for a covariant $\alpha $-completely positive map and study the structure of minimal covariant KSGNS constructions.
\end{abstract}

\maketitle

\section{Introduction and Preliminaries}

Throughout the paper by a map (an operator) we mean a bounded linear one.

\subsection{Completely Positive maps}

Completely positive maps can be regarded as extensions of states,
representations and conditional expectations. A (not necessarily linear) map between $C^*$-algebras, which sends positive elements to positive elements is called positive. By a completely positive map we mean a
map $\varphi: \mathcal{A} \to \mathcal{B}$ between $C^*$-algebras with the property that for each $n$, $\varphi$ is $n$-positive, in the sense that, the map $\varphi_n$ from the $C^*$-algebra $\mathcal{M}_n(\mathcal{A})$ of
all $n\times n$ matrices with entries in $\mathcal{A}$ into $\mathcal{M}_n(\mathcal{B})$ defined by $\varphi([a_{ij}])=[\varphi(a_{ij})]$ is positive. For example, any positive map $\varphi: C(X)\to C(Y)$ between unital $C^*$-algebras is completely positive. There are positive maps which are not completely positive, for instance, the map $\varphi: \mathcal{M}_3(\mathbb{C}) \to \mathcal{M}_3(\mathbb{C})$ defined by $\varphi(X)=2{\rm tr}(X)I- X$ is $2$-positive but not $3$-positive.

In quantum information theory, a quantum operation is defined as a certain completely positive linear map and plays an essential role in describing the transformations, which a quantum mechanical system may undergo. The structure of completely positive maps, the description of
the order relation on their set, the characterization of pure maps and
extremal maps according to their structure are significants in understanding a lot of problems, see \cite{MSM}. \\
The Stinespring theorem for $C^*$-algebras is a significant
generalization of the Gel'fand--Naimark--Segal (GNS) construction to
operator-valued maps. The GNS construction gives a correspondence between cyclic $*$-representations of a $C^*$-algebra and its certain linear functionals and used to establish the celebrated Gel'fand-–Naimark--Segal theorem, which characterize $C^*$-algebras as algebras of Hilbert space operators, see \cite{DOR}. The classical version of Stinespring's theorem states that a map $T$ from a unital $%
C^*$-algebra $\mathcal{A}$ into the $C^*$-algebra $\mathcal{L}(%
\mathcal{H})$ of all (bounded linear) operators is completely positive if
and only if it is of the form $T(a) = V^*\pi(a)V\,\,(a\in \mathcal{A})$,
where $\pi: \mathcal{A}\to \mathcal{L}(K)$ is a $*$-representation of $%
\mathcal{A}$ and $V: \mathcal{H} \to \mathcal{K}$ is a map. Kasparov \cite{Ka} extended the Stinespring theorem for
completely positive linear maps from a $C^*$-algebra $\mathcal{A}$ to the $C^*$-algebra of all adjointable operators on the Hilbert $C^*$-module $\mathcal{H_{B}}\ $over $C^*$-algebra $\mathcal{B}$. He showed that a completely positive linear map $\varphi $ from $\mathcal{A}$ to $\mathcal{H_{B}}\ $ is of the form $\varphi (a)=V^*\pi (a)V(a\in %
\mathcal{A})$, where $\pi :\mathcal{A}\to \mathcal{L}(\mathcal{X})$
is a $\ast $-representation of $\mathcal{A}$ on a Hilbert $C^*$-module
$\mathcal{X}$ and $V:\mathcal{H_{B}}\to \mathcal{A}$ is a bounded
linear map. Bhat et al. \cite{BRS} extended the Stinespring
theorem for completely positive $\varphi $-maps on Hilbert $C^*$-modules. They provided a Stinespring construction associated to a completely positive $\varphi $-map $\Phi $ on a Hilbert $C^*$-module $X$ in terms of the Stinespring construction associated to the underlying completely positive map $\varphi $. A covariant version of this construction
can be found in \cite{J}. Several generalizations of Stinespring theorem are given by mathematicians; see \cite{ACM} and references therein.

On the other hand, the notion of locality in the Wightman formulation of gauge quantum field theory \cite{St,JS} conflicts with the notion of positivity. To avoid this, Jakobczyk and Strocchi \cite{JS} introduced the concept of $\alpha $-positivity. A typical example of an $\alpha$-positive map is as follows, cf. \cite{AO}.
\begin{ex}\label{alpha11}
Let $\mathcal{B}\subseteq \mathcal{A}$ be unital $C^*$-algebras and $P:\mathcal{A}\to\mathcal{B}$ be a conditional expectation, i.e. a unital $*$-map satisfying $P(b_1ab_2)=b_1P(a)b_2\,\, (a\in\mathcal{A}, b_1,b_2\in \mathcal{B})$. Let $\rho$ be an $P$-functional, i.e. a Hermitian functional on $\mathcal{A}$ such that $\rho(P(a))=\rho(a)$ and $2\rho(P(a)^*P(a))\geq \rho(a^*a)$ for all $a\in\mathcal{A}$. Then $\alpha(a):=2P(a)-a$ is a linear involution, i.e. $\alpha^2(a)=a$ for all $a\in \mathcal{A}$ and satisfies $\rho(\alpha(a_1)\alpha(a_2))=\rho(a_1a_2)$ and $\rho(\alpha(a)^*a)\geq 0$ for all $a_1, a_2, a \in \mathcal{A}$.
\end{ex}
Motivated by the notion of $\alpha $-positivity and $P$-functionals (see \cite{H, AO}), Heo et al. \cite{HHJ} introduced the notion of $\alpha$-completely positive map between $C^*$-algebras and provided a Kasparov--Stinespring--Gelfand--Naimark--Segal
(KSGNS) type construction for $\alpha $-completely positive maps. Here, positivity is inherent in Hermitian maps in terms of the map $\alpha $. The $\alpha $-complete positivity provides a positive definite inner product
associated to the indefinite one, and the interplay between these two is indeed the characteristic feature of Krein spaces among all indefinite inner product spaces.

\subsection{Hilbert $C^*$-moduels}

Hilbert $C^{*}$-modules are essentially objects like Hilbert spaces, except that the inner product, instead of being complex--valued, takes its values in a $C^{*}$-algebra. Although Hilbert $C^{*}$-modules behave like Hilbert spaces in some ways, some fundamental Hilbert space properties like Pythagoras equality, the adjointability of operators and the decomposition into orthogonal complements do not hold in general.
An inner product $C^*$-module over a $C^{*}$-algebra $\mathcal A$ is a complex linear space $\mathcal{X}$ which is a right $\mathcal A$-module with a compatible scalar multiplication (i.e., $\gamma(xa) = (\gamma x)a = x(\gamma a)$ for all $x\in \mathcal{X}, a\in\mathcal A, \gamma\in\mathbb{C}$) and equipped with an $\mathcal A$-valued inner product $\langle\cdot,\cdot\rangle \,: \mathcal{X}\times \mathcal{X}\longrightarrow\mathcal A$ satisfying\\
(i) $\langle x, \gamma y + \mu z\rangle = \gamma\langle x, y\rangle + \mu\langle x, z\rangle$,\\
(ii) $\langle x, ya\rangle=\langle x, y\rangle a$,\\
(iii) $\langle x, y\rangle^*=\langle y, x\rangle$,\\
(iv) $\langle x, x\rangle\geq0$ and $\langle x, x\rangle=0$ if and only if $x=0$,\\
for all $x, y, z\in \mathcal{X}, a\in\mathcal A, \gamma, \mu\in\mathbb{C}$. It is easy to observe that $\|x\|=\|\langle x, x\rangle\|^{\frac{1}{2}}$ defines a norm on $\mathcal{X}$, where the later
norm is that of $\mathcal{A}$. If $\mathcal{X}$ with respect to this norm is complete, then it is called a \emph{Hilbert $\mathcal A$-module}, or a \emph{Hilbert $C^*$-module} over $\mathcal A$. Complex Hilbert spaces are (left) Hilbert $\mathbb{C}$-modules. Any $C^*$-algebra $\mathcal A$ can be regarded as a Hilbert $C^*$-module over itself via $\langle a, b\rangle :=a^* b$. The Hilbert $\mathcal{A}$-module $\mathcal{X}$
is called full if the ideal generated by $\{\langle x,y\rangle: x, y\in \mathcal{X}\}$ is dense in $\mathcal{A}$. A map $T$ from a Hilbert $\mathcal{A}$-module $\mathcal{X}$ into another Hilbert $\mathcal{A}$-module $\mathcal{Y}$
is adjointable if there is a map $T^*:\mathcal{Y}%
\to \mathcal{X}$ such that $\langle Tx,y\rangle =\langle x,T^*y\rangle \ $for all $x\in \mathcal{X}$ and $y\in $ $\mathcal{Y}$. It is easy to show that any adjointable map $T$ is a bounded $\mathcal{A}$-module one. We denote by $\mathcal{L}(\mathcal{X},\mathcal{Y})$ the space of all adjointable
module morphisms from $\mathcal{X}$ to $\mathcal{Y}$. In the case that $%
\mathcal{X}=$ $\mathcal{Y}$, it is denoted by $\mathcal{L}(\mathcal{X})$,
which is a $C^*$-algebra. The reader is referred to \cite{LAN} for
basic notions related to Hilbert $C^*$-modules.\\
Let $\varphi: \mathcal{A}\to \mathcal{B}$ be a linear map. A map $\Phi $ from a Hilbert $\mathcal{A}$-module $\mathcal{X}$ into another Hilbert $\mathcal{B}$-module $\mathcal{Y}$ is called a $\varphi $-map if $\langle \Phi (x),\Phi (y)\rangle =\varphi (\langle x,y\rangle )$ for all $x,y\in \mathcal{X}$. A $\varphi $-map $\Phi $ is completely positive if $\varphi $ is completely positive.

\subsection{Krein spaces}

A Krein space, as an indefinite generalization of a Hilbert space, is a vector space equipped with a symmetric or Hermitian bilinear form $[\cdot, \cdot]$ in such a way that $[x, x]$ can be positive, negative or zero, \cite{BOG, Azizov} for more information. This notion was first defined by Ginzburg \cite{Gi}. Indeed lack of positivity in some models in quantum field theories made theoretical physicists to consider indefinite structures. Since then many mathematicians have investigated them. Recently Heo et al. \cite{HHJ, HJK} studied Krein $C^*$-modules and covariant representations on Krein $C^*$-modules. A treatment of operator convex functions is presented in \cite{MD}.

Motivated by the classical theory of Krein spaces, we can introduce a parallel theory to the setting of Hilbert $C^*$-modules. For an exposition on the subject see \cite{Ando}.

\begin{defn}
\upshape
Let $(\mathcal{H},\langle\cdot,\cdot\rangle)$ be a Hilbert $\mathcal{A}$%
-module over a $C^*$-algebra $\mathcal{A}$ and let $J$ be a fundamental
symmetry on $\mathcal{H}$, i.e., $J=J^*=J^{-1}$. Then one can define an
indefinite $\mathcal{A}$-valued inner product by
\begin{eqnarray*}
[x,y]: =\langle Jx,y\rangle\qquad(x,y\in \mathcal{H}).
\end{eqnarray*}
In this case $(\mathcal{H},J)$ is called a Krein $\mathcal{A}$-module.
\end{defn}

For a Krein $\mathcal{A}$-module $(\mathcal{H},J)$, if $\mathcal{H}_+$, $%
\mathcal{H}_-$ are the ranges of projections $P_+=(I+J)/2$, $P_-=(I-J)/2$,
where $I$ denote the identity operator, then one obtains the orthogonal
direct sum $\mathcal{H}=\mathcal{H}_+\oplus \mathcal{H}_-$, $J=P_+-P_-$, $%
[x,x]=|P_+x|^2-|P_-x|^2$ and $\langle x,x \rangle=|P_+x|^2+|P_-x|^2$. If $%
\mathcal{A}=\mathbb{C}$, then we reach the classical definition of a Krein
space and its fundamental structure. Obviously, if $J=I$, then the theory
reduces to the theory of Hilbert spaces.

\begin{defn}
\upshape
Let $\mathcal{A}$ be a $C^*$-algebra and $\alpha$ be a $*$-automorphism
such that $\alpha^2$ is the identity operator. Evidently if $\mathcal{A}$ is
unital, then $\alpha(1)=1$. One can define an indefinite involution $%
x^\#=\alpha(x^*)$ on $\mathcal{A}$. Then $(\mathcal{A},\alpha)$ is called a
Krein $C^*$-algebra. Thus $\|\alpha(x^\#)x\|=\|x\|^2$ for all $x \in %
\mathcal{A}$. It is easy to see that $\mathcal{A} = \mathcal{A}_+ \oplus %
\mathcal{A}_-$, where $\mathcal{A}_+ = \{x \in \mathcal{A} | \alpha(x) = x\}$
and $\mathcal{A}_- = \{x \in \mathcal{A} | \alpha(x) = -x\}$. In addition, $%
\mathcal{A}_+$ is a $C^*$-algebra and $(\mathcal{A}_-,\alpha)$ is a Krein $C^*$-module over $\mathcal{A}_+$
with $[x,y]=x^\# y=\alpha(x^*)y=-x^*y$ for all $x,y\in%
\mathcal{A}_-$.
\end{defn}


\begin{ex}
\upshape
Consider the usual unital commutative $C^*$-algebra $C[0,1]$ and the
automorphism $\alpha(f)(x)=f(1-x)$. Then $C[0,1]$ together with the
indefinite involution $f^\#(x)=\overline{f(1-x)}$ is a Krein $C^*$%
-algebra.
\end{ex}


\begin{ex}
\upshape
Let $(\mathcal{H},J)$ be a Krein $C^*$-module. For each $T \in \mathcal{%
L}(\mathcal{H})$ there exists an operator $T^\# \in \mathcal{L}(\mathcal{H})$
such that $[T\xi,\eta]=[\xi, T^\#\eta]$. Evidently $T^\#=JT^*J$ and $%
\mathcal{L}(\mathcal{H})$ equipped with $\alpha(T)=JTJ$ is a Krein $C^*$%
-algebra.
\end{ex}


\begin{defn}
\upshape Let $\mathcal{A}$ be a $C^*$-algebra and let $(\mathcal{K},J)$
be a Krein $C^*$-module. A homomorphism $\pi :\mathcal{A}\to
\mathcal{L}(\mathcal{K})$ is called a representation of $\mathcal{A}$ on $(%
\mathcal{K},J)$ if $\pi (a^*)=J\pi (a)^*J=\pi (a)^{\#}$, or
equivalently, $[\pi (a)\xi ,\eta ]=[\xi ,\pi (a^*)\eta ]$.
\end{defn}

Focusing on the structure of indefinite version of Hilbert $C^*$-modules, we investigate $\Phi $-maps associated to $\varphi $-maps, which are a certain type of $\alpha $-completely positive maps. We then prove a KSGNS (Kasparov--Stinespring--Gel'fand--Naimark--Segal) type theorem for $\alpha $-completely positive maps on Krein $C^*$-modules and show that the minimal KSGNS construction is unique up to unitary equivalence. We also establish a covariant version of the KSGNS type theorem for a covariant $\alpha $-completely positive map and study the structure of minimal covariant KSGNS constructions. Our result provide some variants and some generalizations of results of \cite{HJK} in the context of maps on Krein $C^*$-modules.

\section{KSGNS type construction for $\protect\alpha $-CP maps}

In this section, we assume that $(\mathcal{A},\alpha )$ is a unital $C^*
$-algebra with the unit $1$. We start our work with the following modified
definition of \cite[Definition 2.4]{HHJ} playing an essential role in the
paper. It provides a generalization of $\alpha$-positivity introduced in Example \ref{alpha11}.

\begin{defn}
\label{def4} \upshape An $\alpha $-completely positive map (briefly, $\alpha
$-CP) of $\mathcal{A}$ on a Krein $C^*$-module $\left( \mathcal{H}%
,J\right) $ is a $*$-map $\varphi :\mathcal{A}\to \mathcal{L}(%
\mathcal{H})$ such that

\begin{enumerate}
\item[(i)] $\varphi (a^{\#})=\varphi (a)^{\#}=\varphi (a^*)$, or
equivalently, $\varphi \left( \alpha (a)\right) =J\varphi (a)J=\varphi (a)$
for all $a\in \mathcal{A}$;

\item[(ii)] the $n\times n$ matrix $[\varphi (a_{i}^{\#}a_{j})]$ is positive
for all $n\geq 1$ and each $a_{1},\cdots ,a_{n}\in \mathcal{A}$, or
equivalently, $\sum_{i=1}^{n}\sum_{j=1}^{n}\left\langle \xi
_{i},\varphi \left( \alpha \left( a_{i}\right)^*a_{j}\right) \xi
_{j}\right\rangle \geq 0$ for all $n\geq 1$, $a_{1},\cdots ,a_{n}\in %
\mathcal{A}$ and $\xi _{1},\cdots ,\xi _{n}\in \mathcal{H}$;

\item[(iii)] for any $a\in \mathcal{A}$, there is $M(a)>0$ such that
\begin{equation*}
\left[ \varphi \left( \left( aa_{i}\right) ^{\#}aa_{j}\right) \right]
_{i,j=1}^{n}\leq M(a)\left[ \varphi \left( a_{i}^{\#}a_{j}\right) \right]
_{i,j=1}^{n}
\end{equation*}%
for all $n\geq 1$ and $a_{1},\cdots ,a_{n}\in \mathcal{A}$.
\end{enumerate}

To be sure that our maps are continuous, we may assume that the constant $%
M(a)$ is of the form $K(a)\left\Vert a\right\Vert $ with $K(a)>0$
\end{defn}

Let $\left( \mathcal{H}_{1},J_{1}\right) $ and $\left( \mathcal{H}%
_{2},J_{2}\right) $ be Krein $C^*$-modules. For $T\in \mathcal{L}(%
\mathcal{H}_{1},\mathcal{H}_{2})$, let us put
\[
T^{\#}=J_{1}T^*J_{2}.
\]


\begin{defn}
\upshape
Let $\mathcal{X}$ be a Hilbert $\mathcal{A}$-module and let $(\mathcal{H}%
_{1},J_1),(\mathcal{H}_{2},J_2)$ Krein $\mathcal{B}$-modules. For an $\alpha$%
-CP map $\varphi:\mathcal{A}\to \mathcal{L}(\mathcal{H}_{1})$, a map
$\Phi :\mathcal{X}\to \mathcal{L}(\mathcal{H}_{1},\mathcal{H}_{2})$
is called a ($\alpha $-completely positive) $\varphi$-map if for any $x,y\in %
\mathcal{X}$,
\[
\Phi\left(x\right)^{\#} \Phi\left(y\right) =\varphi \left(\left\langle
x,y\right\rangle \right).
\]
\end{defn}


\begin{defn}
\upshape
A representation of a Hilbert $\mathcal{A}$-module $\mathcal{X}\ $on Krein $%
\mathcal{B}$-modules $\left(\mathcal{H}_{1},J_{1}\right) $ and $\left( %
\mathcal{H}_{2},J_{2}\right) \ $is a map $\pi _{\mathcal{X}}:\mathcal{X}%
\to \mathcal{L}(\mathcal{H}_{1},\mathcal{H}_{2})$ with the property
that there is a representation $\pi _{\mathcal{A}}$\ of $\mathcal{A}$ on $%
\left( \mathcal{H}_{1},J_{1}\right) $ such that
\begin{equation*}
\pi _{\mathcal{X}}\left( x\right) ^{\#}\pi _{\mathcal{X}}\left( y\right)
=\pi _{\mathcal{A}}\left( \left\langle x,y\right\rangle \right).
\end{equation*}%
Then we say that $\pi _{\mathcal{X}}$ is a $\pi _{\mathcal{A}}$%
-representation.
\end{defn}


\begin{rem}
\upshape
Let $\pi _{\mathcal{X}}$ be a $\pi _{\mathcal{A}}$-representation of a
Hilbert $\mathcal{A}$-module $\mathcal{X}$ on Krein $\mathcal{B}$-modules $%
\left( \mathcal{H}_{1},J_{1}\right) $ and $\left( \mathcal{H}%
_{2},J_{2}\right)$.

\begin{enumerate}
\item If $\mathcal{X}$ is full, then $\pi _{\mathcal{A}}$ is unique.

\item If $\left[ \pi _{\mathcal{X}}\left( \mathcal{X}\right) \mathcal{H}_{1}%
\right] =\mathcal{H}_{2},$ then for any $x\in\mathcal{X}$ and $a\in %
\mathcal{A}$,
\begin{equation} \label{eq2.3}
\pi _{\mathcal{X}}\left(xa\right) =\pi _{\mathcal{X}}\left(x\right) \pi_{%
\mathcal{A}}(a).
\end{equation}
Indeed, for each $x,y\in \mathcal{X}$ and $a\in \mathcal{A},$ we obtain that
\begin{align*}
\pi _{\mathcal{X}}\left( xa\right)^{\#}\pi _{\mathcal{X}}\left( y\right)
&=\pi _{\mathcal{A}}\left( \left\langle xa,y\right\rangle \right) =\pi _{%
\mathcal{A}}\left(a^*\right) \pi _{\mathcal{A}}\left(\left\langle
x,y\right\rangle \right) \\
&=\pi _{\mathcal{A}}\left(a\right)^{\#}\pi _{\mathcal{X}}\left(
x\right)^{\#}\pi _{\mathcal{X}}\left( y\right) \\
&=\left(\pi _{\mathcal{X}}\left(x\right)\pi _{\mathcal{A}}(a)\right)^{\#}%
\pi_{\mathcal{X}}\left( y\right),
\end{align*}
whence we deduce that $\pi _{\mathcal{X}}\left( xa\right) ^{\#}=\left(\pi _{%
\mathcal{X}}\left(x\right)\pi _{\mathcal{A}}(a)\right)^{\#}$, which implies %
\eqref{eq2.3}.
\end{enumerate}
\end{rem}


\noindent The next proposition gives a typical example of an $\alpha$-completely
positive map.

\begin{prop}
Let $(\mathcal{A},\alpha )$ be a unital Krein $C^*$-algebra, $\pi _{%
\mathcal{A}}$ be a representation of $\mathcal{A}$ on Krein $C^*$%
-modules $(\mathcal{K}_{1},J_{1}=\mathrm{id}_{\mathcal{K}_{1}})$, let $\pi _{%
\mathcal{X}}$ be a $\pi _{\mathcal{A}}$-representation of a Hilbert $\mathcal{A}$-module $\mathcal{X}$ on $(%
\mathcal{K}_{1},J_{1}=\mathrm{id}_{\mathcal{K}_{1}})$ and $(\mathcal{K}%
_{2},J_{2}=\mathrm{id}_{\mathcal{K}_{2}})$, let $\left( \mathcal{H}%
_{1},J_{3}\right) $ and $\left( \mathcal{H}_{2},J_{4}=\mathrm{id}_{%
\mathcal{H}_{2}}\right) $ be Krein $C^*$-modules, and $V:\mathcal{H}%
_{1}\to $ $\mathcal{K}_{1}$ and $W:\mathcal{H}_{2}\to %
\mathcal{K}_{2}$ be two operators such that $V^{\#}=V^*,\pi _{%
\mathcal{A}}(\alpha (a))V=J_{1}\pi _{\mathcal{A}}(a)VJ_{3}$ for all $a\in %
\mathcal{A}$, and finally let $W$ be a coisometry with $W^{\#}=W^*$.
Then the map $\varphi :\mathcal{A}\to \mathcal{L}\left( \mathcal{H}%
_{1}\right) $ given by
\begin{equation*}
\varphi (a)=V^{\#}\pi _{\mathcal{A}}(a)V\ \ \ \left( a\in \mathcal{A}\right)
\end{equation*}%
is an $\alpha $-completely positive map and the map $\Phi :\mathcal{X}%
\to \mathcal{L}(\mathcal{H}_{1},\mathcal{H}_{2})$ given by
\begin{equation*}
\Phi (x)=W^{\#}\pi _{\mathcal{X}}(x)V\ \ \left( x\in \mathcal{X}\right)
\end{equation*}%
is an $\alpha $-completely positive $\varphi $-map.
\end{prop}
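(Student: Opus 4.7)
The strategy is to exploit the many ``trivializations'' among the fundamental symmetries (only $J_3$ is nontrivial), together with the hypotheses $V^\# = V^*$ and $\pi_{\mathcal{A}}(\alpha(a))V = J_1\pi_{\mathcal{A}}(a)VJ_3$, to reduce every $\alpha$-CP clause to a calculation of classical Kasparov--Stinespring type. The main bookkeeping obstacle is that the $\#$-operation depends on which pair of Krein modules a given operator connects, but once one notes that $\#$ collapses to $*$ almost everywhere, the proof is essentially routine.

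First I would unpack the hypotheses. Since $J_1,J_2,J_4$ are identities, the $\#$-operation on any adjointable operator between $\mathcal{K}_1,\mathcal{K}_2,\mathcal{H}_2$ coincides with $*$; in particular $\pi_{\mathcal{A}}\colon\mathcal{A}\to\mathcal{L}(\mathcal{K}_1)$ is an ordinary $C^*$-algebra $*$-representation (so $\|\pi_{\mathcal{A}}(a)\|\le\|a\|$), $\pi_{\mathcal{X}}(x)^\#=\pi_{\mathcal{X}}(x)^*$, and $W^\#=W^*$ is automatic so the coisometry hypothesis reads $WW^*=I$. The assumption $V^\#=V^*$ translates to $J_3V^*=V^*$, equivalently $VJ_3=V$, and the covariance relation thereby simplifies to $\pi_{\mathcal{A}}(\alpha(a))V=\pi_{\mathcal{A}}(a)V$. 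Taking adjoints and using $b^\#=\alpha(b^*)=\alpha(b)^*$ together with the fact that $\pi_{\mathcal{A}}$ is a $*$-homomorphism yields the key identity
\[
\varphi(b^\# c)=V^*\pi_{\mathcal{A}}(\alpha(b))^*\pi_{\mathcal{A}}(c)V=(\pi_{\mathcal{A}}(b)V)^*(\pi_{\mathcal{A}}(c)V)\qquad(b,c\in\mathcal{A}),
\]
so that $\alpha$ is ``absorbed'' whenever adjacent to $V$ and $\#$-products reduce to ordinary $*$-products.

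This identity makes the three clauses of Definition \ref{def4} immediate. For (i), one computes $\varphi(a)^\#=J_3V^*\pi_{\mathcal{A}}(a)^*VJ_3=V^*\pi_{\mathcal{A}}(a)^*V$ (by $J_3V^*=V^*$ and $VJ_3=V$), which matches both $\varphi(a^\#)$ (via the key identity with $b=a,c=1$) and $\varphi(a^*)$ (via $\pi_{\mathcal{A}}(a)^*=\pi_{\mathcal{A}}(a^*)$). For (ii), the key identity yields
\[
\sum_{i,j}\langle\xi_i,\varphi(\alpha(a_i)^*a_j)\xi_j\rangle=\Big\langle\sum_i\pi_{\mathcal{A}}(a_i)V\xi_i,\,\sum_j\pi_{\mathcal{A}}(a_j)V\xi_j\Big\rangle\ge 0.
\]
For (iii), assemble $R=(\pi_{\mathcal{A}}(a_1)V,\ldots,\pi_{\mathcal{A}}(a_n)V)$ as a row operator $\mathcal{H}_1^n\to\mathcal{K}_1$; then $[\varphi(a_i^\# a_j)]=R^*R$ and $[\varphi((aa_i)^\#(aa_j))]=R^*\pi_{\mathcal{A}}(a)^*\pi_{\mathcal{A}}(a)R\le\|a\|^2R^*R$, so one may take $M(a)=\|a\|\cdot\|a\|$.

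Finally, for the $\varphi$-map identity, a direct computation using $W^\#=W^*$, $\pi_{\mathcal{X}}(x)^\#=\pi_{\mathcal{X}}(x)^*$, the coisometry relation $WW^*=I$, and the $\pi_{\mathcal{A}}$-representation identity $\pi_{\mathcal{X}}(x)^*\pi_{\mathcal{X}}(y)=\pi_{\mathcal{A}}(\langle x,y\rangle)$ gives
\[
\Phi(x)^\#\Phi(y)=V^*\pi_{\mathcal{X}}(x)^*WW^*\pi_{\mathcal{X}}(y)V=V^*\pi_{\mathcal{A}}(\langle x,y\rangle)V=\varphi(\langle x,y\rangle),
\]
completing the proof.
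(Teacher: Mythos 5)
Your proposal is correct and follows essentially the same route as the paper: both proofs hinge on using $V^{\#}=V^{*}$ and the intertwining relation $\pi_{\mathcal{A}}(\alpha(a))V=\pi_{\mathcal{A}}(a)V$ to reduce $\varphi(b^{\#}c)$ to $(\pi_{\mathcal{A}}(b)V)^{*}(\pi_{\mathcal{A}}(c)V)$, from which (i)--(iii) and the $\varphi$-map identity follow by Kasparov--Stinespring-type computations. Your packaging via the key identity and the row operator $R$ is a tidier (and in clause (iii) slightly more careful, since it bounds by $\|\pi_{\mathcal{A}}(a^{*}a)\|$ rather than $\|\pi_{\mathcal{A}}(\alpha(a^{*})a)\|$) version of the paper's element-wise sums, but it is not a different argument.
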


\begin{proof}
Indeed, we have
\begin{equation*}
\varphi \left( a^*\right) =V^{\#}\pi _{\mathcal{A}}(a^*)V=V^{\#}\pi _{\mathcal{A}}(a)^*V=\left( V^{\#}\pi _{\mathcal{A}%
}(a)V\right)^*=\varphi \left( a\right)^*
\end{equation*}
for all $a\in \mathcal{A}$ and
\begin{equation*}
\varphi (\alpha \left( a\right) )=V^{\#}\pi _{\mathcal{A}}(\alpha \left(
a\right) )V\ =V^{\#}J_{1}\pi _{\mathcal{A}}(a)VJ_{3}=J_{3}V^{\#}\pi _{%
\mathcal{A}}(a)VJ_{3}=J_{3}\varphi (a)J_{3}
\end{equation*}%
for all $a\in \mathcal{A}$. On the other hand,
\begin{equation*}
\varphi (\alpha \left( a\right) )=V^{\#}\pi _{\mathcal{A}}(\alpha \left(
a\right) )V\ =V^{\#}\pi _{\mathcal{A}}(a)J_{1}VJ_{3}=V^{\#}\pi _{\mathcal{A}%
}(a)V=\varphi (a)
\end{equation*}%
for all $a\in \mathcal{A}.$Therefore, $\varphi (\alpha \left( a\right)
)=J_{3}\varphi (a)J_{3}=\varphi (a)$ for all $a\in \mathcal{A}.$ Now, let $%
a_{1},\cdots ,a_{n}\in \mathcal{A}$ and $\xi _{1},\cdots ,\xi _{n}\in %
\mathcal{H}_{1}$.
Then we have
\begin{align*}
\sum_{i=1}^{n}\sum_{j=1}^{n}\left\langle \xi _{i},\varphi
\left( \alpha \left( a_{i}\right)^*a_{j}\right) \xi _{j}\right\rangle
&=\sum_{i=1}^{n}\sum_{j=1}^{n}\left\langle \xi
_{i},V^{\#}\pi _{\mathcal{A}}(\alpha \left( a_{i}\right)^*a_{j})V\xi
_{j}\right\rangle \\
&=\sum_{i=1}^{n}\sum_{j=1}^{n}\left\langle \pi _{\mathcal{A}%
}(\alpha \left( a_{i}\right)^*)^*V\xi _{i},\pi _{\mathcal{A}%
}(a_{j})V\xi _{j}\right\rangle \\
&=\sum_{i=1}^{n}\sum_{j=1}^{n}\left\langle J_{1}\pi _{%
\mathcal{A}}(\alpha \left( a_{i}\right) )J_{1}V\xi _{i},\pi _{\mathcal{A}%
}(a_{j})V\xi _{j}\right\rangle \\
&=\sum_{i=1}^{n}\sum_{j=1}^{n}\left\langle \pi _{\mathcal{A}%
}(a_{i})VJ_{3}\xi _{i},\pi _{\mathcal{A}}(a_{j})V\xi _{j}\right\rangle \\
&=\left\langle \sum_{i=1}^{n}\pi _{\mathcal{A}}(a_{i})V\xi
_{i},\sum_{j=1}^{n}\pi _{\mathcal{A}}(a_{j})V\xi _{j}\right\rangle
\geq 0.
\end{align*}%
Let all $n\geq 1$ and $a,a_{1},\cdots ,a_{n}\in \mathcal{A}$. Then
\begin{align*}
&\hspace{-1.5cm}\left\langle \left[ \varphi \left( \left( aa_{i}\right)
^{\#}aa_{j}\right) \right] _{i,j=1}^{n}\left( \xi _{k}\right)
_{k=1}^{n},\left( \xi _{k}\right) _{k=1}^{n}\right\rangle \\
&=\left\langle \left( \sum_{j=1}^{n}V^{\#}\pi _{\mathcal{A}}(\alpha
\left( a_{i}^*a^*\right) aa_{j})V\xi _{j}\right)
_{i=1}^{n},\left( \xi _{k}\right) _{k=1}^{n}\right\rangle \\
&=\ \sum_{i=1}^{n}\left\langle \sum_{j=1}^{n}V^{\#}\pi _{%
\mathcal{A}}(\alpha \left( a_{i}^*a^*\right) aa_{j})V\xi
_{j},\xi _{i}\right\rangle \\
&=\ \sum_{i=1}^{n}\sum_{j=1}^{n}\left\langle \pi _{%
\mathcal{A}}(\alpha \left( a^*\right) aa_{j})V\xi _{j},\pi _{%
\mathcal{A}}(\alpha \left( a_{i}^*\right) )^*V\xi
_{i}\right\rangle \\
&=\left\langle \pi _{\mathcal{A}}(\alpha \left( a^*\right)
a)\sum_{j=1}^{n}\pi _{\mathcal{A}}\left( a_{j}\right) V\xi
_{j},\sum_{i=1}^{n}\pi _{\mathcal{A}}(\alpha \left( a_{i}\right)
)V\xi _{i}\right\rangle \\
&=\left\langle \pi _{\mathcal{A}}(\alpha \left( a^*\right)
a)\sum_{j=1}^{n}\pi _{\mathcal{A}}\left( a_{j}\right) V\xi
_{j},\sum_{i=1}^{n}J_{1}\pi _{\mathcal{A}}(a_{i})VJ_{3}\xi
_{i}\right\rangle \\
&=\left\langle \pi _{\mathcal{A}}(\alpha \left( a^*\right)
a)\sum_{j=1}^{n}\pi _{\mathcal{A}}\left( a_{j}\right) V\xi
_{j},\sum_{i=1}^{n}{}_{1}\pi _{\mathcal{A}}(a_{i})V\xi
_{i}\right\rangle \\
&\leq \left\Vert \pi _{\mathcal{A}}(\alpha \left( a^*\right)
a)\right\Vert \left\langle \sum_{j=1}^{n}\pi _{\mathcal{A}}\left(
a_{j}\right) V\xi _{j},\sum_{i=1}^{n}{}_{1}\pi _{\mathcal{A}%
}(a_{i})V\xi _{i}\right\rangle \\
&\leq \left\Vert \pi _{\mathcal{A}}(\alpha \left( a^*\right)
a)\right\Vert \left\langle \left[ \varphi \left( \left( a_{i}\right)
^{\#}a_{j}\right) \right] _{i,j=1}^{n}\left( \xi _{k}\right)
_{k=1}^{n},\left( \xi _{k}\right) _{k=1}^{n}\right\rangle
\end{align*}%
for all $\xi _{1,}\xi _{2},\cdots ,\xi _{n}\in \mathcal{H}_{1}$. Thus we
showed that $\varphi $ is an $\alpha $-completely positive map.

To show that $\Phi $ is an $\alpha $-completely positive $\varphi $-map, let
$x,y\in $ $\mathcal{A}$. We have
\begin{eqnarray*}
\Phi (x)^{\#}\Phi (y) &=J_{3}V^{\#}\pi _{\mathcal{X}}(x)^*WJ_{4}W^{\#}\pi _{\mathcal{X}}(x)V=V^{\#}J_{1}\pi _{\mathcal{X}%
}(\left\langle x,y\right\rangle )V \\
&=V^{\#}\pi _{\mathcal{X}}(\left\langle x,y\right\rangle )V=\varphi \left(
\left\langle x,y\right\rangle \right).
\end{eqnarray*}
\end{proof} 

From now on we assume that $(\mathcal{H}_1,J_1)$ and $(\mathcal{H}_2,J_2=%
\mathrm{id}_{\mathcal{H}_2})$ are Krein $\mathcal{B}$-modules, and $\varphi:%
\mathcal{A}\to\mathcal{L}(\mathcal{H}_{1})$ is an $\alpha $-CP map.
The next Theorem can be regarded as a generalization in the context of maps on Krein $C^*$-modules of \cite[Theorem 4.4]{HHJ}.

\begin{thm}
\label{main1} Let $\mathcal{X}$ be a Hilbert $\mathcal{A}$-module and $\Phi:%
\mathcal{X}\to \mathcal{L}(\mathcal{H}_{1},\mathcal{H}_{2})$ a $%
\varphi$-map. Then there are a Krein $\mathcal{B}$-module $\left( \mathcal{K}%
_{1},J_{3}\right)$ and a Hilbert $\mathcal{B}$-module $\mathcal{K}_{2}$, a
representation $\pi _{\varphi }\ $of $\mathcal{A}$ on $\left( \mathcal{K}%
_{1},J_{1}\right)$, a $\pi_{\varphi }$-representation $\pi_{\mathcal{X}}$ of
$\mathcal{X}$ on $\left(\mathcal{K}_{1},J_{3}\right)$ and $\left(\mathcal{K}%
_{2},J_{4}=\mathrm{id}_{\mathcal{K}_2}\right)$, two operators $V_{\Phi }:%
\mathcal{H}_{1}\to \mathcal{K}_{1}$ and $W_{\Phi }:\mathcal{H}%
_{2}\to \mathcal{K}_{2}$ such that

\begin{enumerate}
\item $V_{\Phi }^{\#}=V_{\Phi }^*$, $\pi _{\varphi }(\alpha
(a))V_{\Phi }=J_{3}\pi _{\varphi }(a)V_{\Phi }J_{1}$ for all $a\in %
\mathcal{A}$, and $W_{\Phi }$ is a coisometry with $W_{\Phi }^{\#}=W_{\Phi
}^*$;

\item $\varphi (a)=V_{\Phi }^{\#}\pi _{\varphi }(a)V_{\Phi }$ for all $a\in %
\mathcal{A}$;

\item $\Phi (x)=W_{\Phi }^{\#}\pi _{\mathcal{X}}(x)V_{\Phi }$ for all $x\in %
\mathcal{X}$.
\end{enumerate}
\end{thm}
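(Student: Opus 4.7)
The plan is to perform a GNS-style construction twice, building $\mathcal{K}_1$ from $\mathcal{A}\otimes\mathcal{H}_1$ and $\mathcal{K}_2$ from $\mathcal{X}\otimes\mathcal{H}_1$, with all operators given by natural formulas. I will outline the four sub-constructions in order and then flag the genuine technical point.

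\emph{Construction of $(\mathcal{K}_1,J_3)$, $\pi_\varphi$, and $V_\Phi$.} Equip the algebraic tensor product $\mathcal{A}\otimes_{\mathbb{C}}\mathcal{H}_1$ with the $\mathcal{B}$-valued sesquilinear form
\[
\langle a\otimes\xi,\,b\otimes\eta\rangle_0 \;:=\; \langle\xi,\,\varphi(\alpha(a)^* b)\eta\rangle_{\mathcal{H}_1}.
\]
Condition (ii) of Definition~\ref{def4} says precisely that this form is positive on diagonal sums. Passing to the Hausdorff quotient and completing yields a Hilbert $\mathcal{B}$-module $\mathcal{K}_1$. Define $\pi_\varphi(a)(b\otimes\xi):=ab\otimes\xi$ and $V_\Phi\xi:=1\otimes\xi$; condition (iii) is exactly the matricial domination needed to force $\pi_\varphi(a)$ to extend to a bounded operator with $\|\pi_\varphi(a)\|^2\leq M(a)$, and multiplicativity is tautological on elementary tensors. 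Next, put
\[
J_3(a\otimes\xi)\;:=\;\alpha(a)\otimes J_1\xi,
\]
and use $\alpha^2=\mathrm{id}$, $J_1^2=I$, together with $\varphi\circ\alpha=J_1\varphi(\cdot)J_1=\varphi$ from condition~(i), to check that $J_3$ descends to the quotient, is involutive and selfadjoint, hence a fundamental symmetry on $\mathcal{K}_1$. A direct calculation gives $\pi_\varphi(a)^*(d\otimes\eta)=\alpha(a)^* d\otimes\eta$, from which $J_3\pi_\varphi(a)^*J_3=\pi_\varphi(a^*)$ follows, so $\pi_\varphi$ is a representation of $\mathcal{A}$ on $(\mathcal{K}_1,J_3)$.

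\emph{Verification of items (1) and (2).} Since $\alpha(1)=1$, one has $J_3V_\Phi\xi=1\otimes J_1\xi=V_\Phi J_1\xi$, giving $V_\Phi^\#=V_\Phi^*$. Evaluating both sides of $\pi_\varphi(\alpha(a))V_\Phi=J_3\pi_\varphi(a)V_\Phi J_1$ on $\xi$ yields $\alpha(a)\otimes\xi$ either way. Finally, $\langle V_\Phi\xi,\pi_\varphi(a)V_\Phi\eta\rangle=\langle 1\otimes\xi,a\otimes\eta\rangle=\langle\xi,\varphi(a)\eta\rangle$ delivers $\varphi(a)=V_\Phi^\#\pi_\varphi(a)V_\Phi$.

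\emph{Construction of $\mathcal{K}_2$, $\pi_{\mathcal{X}}$, $W_\Phi$, and verification of (3).} Equip $\mathcal{X}\otimes_{\mathbb{C}}\mathcal{H}_1$ with
\[
\langle x\otimes\xi,\,y\otimes\eta\rangle_0 \;:=\; \langle\xi,\,J_1\varphi(\langle x,y\rangle)\eta\rangle_{\mathcal{H}_1}.
\]
Since $J_2=\mathrm{id}_{\mathcal{H}_2}$ forces $\Phi(x)^\#=J_1\Phi(x)^*$, the $\varphi$-map identity $\Phi(x)^\#\Phi(y)=\varphi(\langle x,y\rangle)$ gives
\[
\sum_{i,j}\langle\xi_i,J_1\varphi(\langle x_i,x_j\rangle)\xi_j\rangle \;=\; \Big\langle\sum_i\Phi(x_i)\xi_i,\,\sum_j\Phi(x_j)\xi_j\Big\rangle_{\mathcal{H}_2} \;\geq\; 0,
\]
so the form is positive. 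Complete the Hausdorff quotient to get $\mathcal{K}_2$ with $J_4=\mathrm{id}_{\mathcal{K}_2}$. Define $\pi_{\mathcal{X}}(x)(b\otimes\xi):=xb\otimes\xi$; boundedness is checked using condition (iii) and $\langle x,x\rangle\leq\|x\|^2\cdot 1$ in $\mathcal{A}$. The $\pi_\varphi$-representation property $\pi_{\mathcal{X}}(x)^\#\pi_{\mathcal{X}}(y)=\pi_\varphi(\langle x,y\rangle)$ is obtained on elementary tensors after computing $\pi_{\mathcal{X}}(x)^*(yb\otimes\xi)=\alpha(\langle x,y\rangle b)\otimes J_1\xi$. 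Polarizing the positivity computation above yields $\langle x\otimes\xi,y\otimes\eta\rangle_{\mathcal{K}_2}=\langle\Phi(x)\xi,\Phi(y)\eta\rangle_{\mathcal{H}_2}$, so $W_\Phi^\#(x\otimes\xi):=\Phi(x)\xi$ extends to an isometry $\mathcal{K}_2\hookrightarrow\mathcal{H}_2$; its adjoint $W_\Phi:\mathcal{H}_2\to\mathcal{K}_2$ is then a coisometry, and $W_\Phi^\#=W_\Phi^*$ is automatic since $J_2=J_4=\mathrm{id}$. Item (3) is immediate: $W_\Phi^\#\pi_{\mathcal{X}}(x)V_\Phi\xi=W_\Phi^\#(x\otimes\xi)=\Phi(x)\xi$.

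The principal technical obstacle is the boundedness, and hence adjointability, of $\pi_\varphi$ and $\pi_{\mathcal{X}}$: for $\pi_\varphi$ this is exactly what condition (iii) of Definition~\ref{def4} is designed to provide, while for $\pi_{\mathcal{X}}$ one has to combine (iii) with the positivity of $\langle x,x\rangle\in\mathcal{A}$ to dominate $[\varphi(b_i^*\langle x,x\rangle b_j)]$ by an appropriate multiple of $[\varphi(b_i^\# b_j)]$ modulo the $\alpha$-twist. Everything else---well-definedness on the quotients, the $\#$-homomorphism property of $\pi_\varphi$, and the compatibility of $J_3$ with $V_\Phi$ and $\pi_\varphi$---is bookkeeping reducing to $\alpha^2=\mathrm{id}$, $J_1^2=I$, and the key identity $\varphi\circ\alpha=\varphi$.
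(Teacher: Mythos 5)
Your construction is essentially the paper's: the same quotient--completion of $\mathcal{A}\otimes\mathcal{H}_1$ under the form $\langle\xi,\varphi(\alpha(a)^*b)\eta\rangle$, the same $J_3$ and $\pi_\varphi$, and (up to replacing $1\otimes J_1\xi$ by $1\otimes\xi$, which is harmless because $J_1\varphi(\cdot)J_1=\varphi(\cdot)$) the same $V_\Phi$; your abstract completion of $\mathcal{X}\otimes\mathcal{H}_1$ is identified, by your own polarization identity, with the paper's concrete $\mathcal{K}_2=[\Phi(\mathcal{X})\mathcal{H}_1]\subseteq\mathcal{H}_2$, with matching $W_\Phi$ and $\pi_{\mathcal{X}}$. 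The one step you leave open --- boundedness of $\pi_{\mathcal{X}}(x)$ --- you propose to settle by a matrix domination $[\varphi(b_i^*\langle x,x\rangle b_j)]\le C\,[\varphi(b_i^{\#}b_j)]$ extracted from condition (iii); the ``$\alpha$-twist'' you flag is a genuine obstruction to that route, since (iii) only controls matrices of the form $[\varphi(b_i^{\#}(a^{\#}a)b_j)]$ and $\langle x,x\rangle$ need not be of the form $a^{\#}a$. The paper sidesteps this entirely: it rewrites the Gram matrix of $\{\Phi(xb_i)\xi_i\}$ as $\langle\zeta,J_3\pi_\varphi(\langle x,x\rangle)\zeta\rangle$ with $\zeta=\sum_i\pi_\varphi(b_i)V_\Phi\xi_i$ and applies Cauchy--Schwarz, so only the boundedness of $\pi_\varphi(\langle x,x\rangle)$ --- which you have already obtained from (iii) --- is needed. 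With that substitution for your final paragraph, the argument is complete and coincides with the paper's.
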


\begin{proof}
It is straightforward to observe via condition (ii) in Definition \ref{def4}
that the quotient $\mathcal{A}\otimes _{\mathrm{alg}}\mathcal{H}%
_{1}/N_{\varphi }$ of the algebraic tensor product $\mathcal{A}\otimes _{%
\mathrm{alg}}\mathcal{H}_{1}$, where
\begin{align*}
N_{\varphi }& =\left\{ \sum_{i=1}^{n}a_{i}\otimes \xi
_{i}:\sum_{i=1}^{n}\sum_{j=1}^{n}\left\langle \xi
_{i},\varphi \left( \alpha \left( a_{i}\right)^*a_{j}\right) \xi
_{j}\right\rangle =0\right\} , \\
(a\otimes \xi +N_{\varphi })b& =a\otimes (\xi b)+N_{\varphi },
\end{align*}%
is a pre-Hilbert $\mathcal{B}$-module with the inner product given by
\[
\left\langle \sum_{i=1}^{n}a_{i}\otimes \xi _{i}+N_{\varphi
},\sum_{j=1}^{m}b_{j}\otimes \eta _{j}+N_{\varphi }\right\rangle
=\sum_{i=1}^{n}\sum_{j=1}^{m}\left\langle \xi _{i},\varphi
\left( \alpha \left( a_{i}\right)^*b_{j}\right) \eta
_{j}\right\rangle . 
\]
Let $\mathcal{K}_{1}$ be the Hilbert $\mathcal{B}$-module obtained by the
completion of $\mathcal{A}\otimes _{\mathrm{alg}}\mathcal{H}_{1}/N_{\varphi
} $. Using \ref{def4} (i), it is easy to check that $\left( \mathcal{K}%
_{1},J_{3}\right) $ is a Krein $\mathcal{B}$-module, where the fundamental
symmetry $J_{3}$ is defined by
\[
J_{3}\left( a\otimes \xi +N_{\varphi }\right) =\alpha \left( a\right)
\otimes (J_{1}\xi )+N_{\varphi }. 
\]
Also it is easy to check that the map $V_{\Phi }:\mathcal{H}_{1}\to %
\mathcal{K}_{1}$ defined by
\[
V_{\Phi }\xi =1\otimes J_{1}\xi +N_{\varphi } 
\]
is an operator, and $V_{\Phi }^*(a\otimes \xi +N_{\varphi
})=J_{1}\varphi (a)\xi $. Moreover, for any $\xi \in \mathcal{H}_{1}$, $%
J_{3}V_{\Phi }J_{1}(\xi )=J_{3}(1\otimes \xi +N_{\varphi })=1\otimes
(J_{1}\xi )+N_{\varphi }=V_{\Phi }(\xi )$, which implies that $J_{3}V_{\Phi
}J_{1}=V_{\Phi }$, and so $V_{\Phi }^{\#}=V_{\Phi }^*.$ Also the map $%
\pi _{\varphi }:\mathcal{A}\to \mathcal{L}(\mathcal{K}_{1})$ given
by
\begin{equation}
\pi _{\varphi }(a)\left( b\otimes \xi +N_{\varphi }\right) =ab\otimes \xi
+N_{\varphi }\qquad (a,b\in \mathcal{A},\xi \in \mathcal{H}_{1}) \label{m2}
\end{equation}%
is a representation of $\mathcal{A}$ on $\left( \mathcal{K}_{1},J_{3}\right)
$, in this case, we have $\pi _{\varphi }(a)^*=\pi _{\varphi }(\alpha
(a)^*)$ (for details see \cite[Theorem 4.4]{HHJ}). Since $V_{\Phi
}^{\#}=V_{\Phi }^*$, we have
\begin{equation*}
V_{\Phi }^{\#}\pi _{\varphi }(a)V_{\Phi }\xi =V_{\Phi }^*\left(
a\otimes J_{1}\left( \xi \right) +N_{\varphi }\right) =J_{1}\varphi
(a)J_{1}\xi =\varphi (a)\xi ,
\end{equation*}%
for all $\xi \in \mathcal{H}_{1}$, and so
\begin{equation*}
\varphi (a)=V_{\Phi }^{\#}\pi _{\varphi }(a)V_{\Phi }\qquad (a\in \mathcal{A}%
).
\end{equation*}%
Moreover, $\mathcal{K}_{1}=[\pi _{\varphi }\left( \mathcal{A}\right) V_{\Phi
}\mathcal{H}_{1}]$. For any $a\in \mathcal{A}$ and $\xi \in \mathcal{H}_{1}$%
, we get
\begin{equation*}
\pi _{\varphi }(\alpha \left( a\right) )V_{\Phi }\xi =\alpha \left( a\right)
\otimes J_{1}\left( \xi \right) +N_{\varphi }
\end{equation*}%
and
\begin{equation*}
J_{3}\pi _{\varphi }(a)V_{\Phi }J_{1}\xi =J_{3}\left( a\otimes \xi
+N_{\varphi }\right) =\alpha \left( a\right) \otimes J_{1}\left( \xi \right)
+N_{\varphi }
\end{equation*}%
whence
\[
\pi _{\varphi }(\alpha \left( a\right) )V_{\Phi }=J_{3}\pi _{\varphi
}(a)V_{\Phi }J_{1}\qquad (a\in \mathcal{A}). 
\]
Let $\mathcal{K}_{2}=\left[ \Phi \left( \mathcal{X}\right) \mathcal{H}_{1}%
\right] \subseteq \mathcal{H}_{2}$ be the closed linear subspace. Then $%
\mathcal{K}_{2}$ is a Hilbert $\mathcal{B}$-module. For the inclusion $%
\mathbf{J}_{\mathcal{K}_{2}}$ of $\mathcal{K}_{2}$ into $\mathcal{H}_{2}$,
put $W_{\Phi }^*=\mathbf{J}_{\mathcal{K}_{2}}$ and then $W_{\Phi }$ is
a coisometry and $W_{\Phi }^{\#}=W_{\Phi }^*$. On the other hand, for
any $a,b\in \mathcal{A}$ and $x\in \mathcal{X}$, since $J_{1}V_{\Phi
}^{\#}=J_{1}V_{\Phi }^*=V_{\Phi }^*J_{3}=V_{\Phi }^{\#}J_{3}$,
we obtain
\begin{align*}
\Phi \left( xa\right) ^{\#}\Phi \left( xb\right) & =\varphi \left(
\left\langle xa,xb\right\rangle \right) =\varphi \left( a^*\left\langle x,x\right\rangle b\right) \\
& =V_{\Phi }^{\#}\pi _{\varphi }\left( a^*\right) \pi _{\varphi
}\left( \left\langle x,x\right\rangle b\right) V_{\Phi } \\
& =V_{\Phi }^{\#}J_{3}\pi _{\varphi }\left( a\right)^*J_{3}\pi
_{\varphi }\left( \left\langle x,x\right\rangle b\right) V_{\Phi } \\
& =J_{1}V_{\Phi }^{\#}\pi _{\varphi }\left( a\right)^*J_{3}\pi
_{\varphi }\left( \left\langle x,x\right\rangle b\right) V_{\Phi }.
\end{align*}%
Therefore, for any $a_{1},\cdots ,a_{n}\in \mathcal{A}$ and $\xi _{1},\cdots
,\xi _{n}\in \mathcal{H}_{1}$, we have
\begin{align*}
\left\Vert \sum_{i=1}^{n}\Phi \left( xa_{i}\right) \xi
_{i}\right\Vert ^{2}& =\left\Vert
\sum_{i=1}^{n}\sum_{j=1}^{n}\left\langle \xi _{i},J_{1}\Phi
\left( xa_{i}\right) ^{\#}\Phi \left( xa_{j}\right) \xi _{j}\right\rangle
\right\Vert \\
& =\left\Vert \sum_{i=1}^{n}\sum_{j=1}^{n}\left\langle \pi
_{\varphi }\left( a_{i}\right) V_{\Phi }\xi _{i},J_{3}\pi _{\varphi }\left(
\left\langle x,x\right\rangle \right) \pi _{\varphi }\left( a_{j}\right)
V_{\Phi }\xi _{j}\right\rangle \right\Vert \\
& =\left\Vert \left\langle \sum_{i=1}^{n}\pi _{\varphi }\left(
a_{i}\right) V_{\Phi }\xi _{i},J_{3}\pi _{\varphi }\left( \left\langle
x,x\right\rangle \right) \sum_{j=1}^{n}\pi _{\varphi }\left(
a_{j}\right) V_{\Phi }\xi _{j}\right\rangle \right\Vert \\
& \leq \left\Vert \sum_{i=1}^{n}\pi _{\varphi }\left( a_{i}\right)
V_{\Phi }\xi _{i}\right\Vert ^{2}\left\Vert J_{3}\pi _{\varphi }\left(
\left\langle x,x\right\rangle \right) \right\Vert ,
\end{align*}%
which implies that for each $x\in \mathcal{X}$, there exists a map $\pi _{%
\mathcal{X}}\left( x\right) :\mathcal{K}_{1}\to \mathcal{K}_{2}$
such that
\begin{equation}
\pi _{\mathcal{X}}\left( x\right) \left( \sum_{i=1}^{n}\pi _{\varphi
}\left( a_{i}\right) V_{\Phi }\xi _{i}\right) =\sum_{i=1}^{n}\Phi
\left( xa_{i}\right) \xi _{i}. \label{m5}
\end{equation}%
Then we obtain that
\begin{align*}
&\hspace{-1cm}
\left\langle \pi _{\mathcal{X}}\left( x\right) \left(
\sum_{i=1}^{n}{}\pi _{\varphi }\left( a_{i}\right) V_{\Phi }\xi
_{i}\right) ,\sum_{j=1}^{m}\Phi \left( y_{j}\right) \eta
_{j}\right\rangle\\
&=\left\langle \sum_{i=1}^{n}\Phi \left(
xa_{i}\right) \xi _{i},\sum_{j=1}^{m}\Phi \left( y_{j}\right) \eta
_{j}\right\rangle \\
&=\sum_{j=1}^{m}\sum_{i=1}^{n}\left\langle \xi
_{i},J_{1}\Phi \left( xa_{i}\right) ^{\#}\Phi \left( y_{j}\right) \eta
_{j}\right\rangle \\
&=\sum_{j=1}^{m}\sum_{i=1}^{n}\left\langle \xi _{i},V_{\Phi
}^{\#}\pi _{\varphi }\left( a_{i}\right)^*J_{3}\pi _{\varphi }\left(
\left\langle x,y_{j}\right\rangle \right) V_{\Phi }\eta _{j}\right\rangle \\
&=\left\langle \sum_{i=1}^{n}\pi _{\varphi }\left( a_{i}\right)
V_{\Phi }\xi _{i},\sum_{j=1}^{m}J_{3}\pi _{\varphi }\left(
\left\langle x,y_{j}\right\rangle \right) V_{\Phi }\eta _{j}\right\rangle ,
\end{align*}%
which implies that
\begin{equation*}
\pi _{\mathcal{X}}\left( x\right)^*\left( \sum_{j=1}^{m}\Phi
\left( y_{j}\right) \eta _{j}\right) =\sum_{j=1}^{m}J_{3}\pi
_{\varphi }\left( \left\langle x,y_{j}\right\rangle \right) V_{\Phi }\eta
_{j}.
\end{equation*}%
Therefore, $\pi _{\mathcal{X}}\left( x\right) \in \mathcal{L}\left( %
\mathcal{K}_{1},\mathcal{K}_{2}\right) $. In this way we have obtained a map
$\pi _{\mathcal{X}}:\mathcal{X}\to \mathcal{L}\left( \mathcal{K}_{1},%
\mathcal{K}_{2}\right) $, and then for any $a_{1},\cdots ,a_{n}\in %
\mathcal{A}$ $\xi _{1},\cdots ,\xi _{n}\in \mathcal{H}_{1}$ and $x,y\in %
\mathcal{X}$, we get
\begin{align*}
\pi _{\mathcal{X}}\left( x\right) ^{\#}\pi _{\mathcal{X}}\left( y\right)
\left( \sum_{i=1}^{n}{}\pi _{\varphi }\left( a_{i}\right) V_{\Phi
}\xi _{i}\right) & =J_{3}\pi _{\mathcal{X}}\left( x\right)^*\left(
\sum_{i=1}^{n}\Phi \left( ya_{i}\right) \xi _{i}\right) \\
& =J_{3}\left( \sum_{i=1}^{n}J_{3}\pi _{\varphi }\left( \left\langle
x,y\right\rangle a_{i}\right) V_{\Phi }\xi _{i}\right) \\
& =\pi _{\varphi }\left( \left\langle x,y\right\rangle \right) \left(
\sum_{i=1}^{n}\pi _{\varphi }\left( a_{i}\right) V_{\Phi }\xi
_{i}\right) ,
\end{align*}%
which implies that for any $x,y\in \mathcal{X}$,
\begin{equation}
\pi _{\mathcal{X}}\left( x\right) ^{\#}\pi _{\mathcal{X}}\left( y\right)
=\pi _{\varphi }\left( \left\langle x,y\right\rangle \right) .
\label{eqn:pi-X and pi-varphi}
\end{equation}%
Therefore $\pi _{\mathcal{X}}$ is a $\pi _{\varphi }$-representation of $%
\mathcal{X}$ on the Krein spaces $\left( \mathcal{K}_{1},J_{3}\right) $ and $%
\left( \mathcal{K}_{2},J_{4}\right) $. Moreover,
\begin{equation*}
W_{\Phi }^{\#}\pi _{\mathcal{X}}\left( x\right) V_{\Phi }\xi =W_{\Phi
}^*\Phi \left( x\right) \xi =\Phi \left( x\right) \xi
\end{equation*}%
for all $\xi \in \mathcal{H}_{1}$.
\end{proof}

\begin{rem}
\upshape
In Theorem \ref{main1}, if $\varphi$ is unital, i.e., $\varphi(1)=1$, then
since $V_\Phi^*(a\otimes \xi+N_\varphi)=J_1\varphi(a)\xi$ for any $a\in%
\mathcal{A}$ and $\xi\in\mathcal{H}_1$, $V_\Phi^*
V_\Phi\xi=V_\Phi^*(1\otimes J_1\xi+N_\varphi)=J_1\varphi(1)J_1\xi=\xi$,
which implies that $V_\Phi$ is isometry.
\end{rem}

\noindent A six-tuple $\left( \pi _{\mathcal{X}},\pi _{\varphi },V_{\Phi },W_{\Phi
},\left( \mathcal{K}_{1},J_{3}\right) ,\left( \mathcal{K}_{2},J_{4}\right)
\right) $, which verifying the relations (1)-(3) in Theorem \ref{main1} is
called the KSGNS construction associated to the $\varphi $-map $\Phi $. If $%
\mathcal{K}_{2}=[\pi _{\mathcal{X}}\left( \mathcal{X}\right) V_{\varphi }%
\mathcal{H}_{1}]$ and $\mathcal{K}_{1}=\left[ \pi _{\varphi }\left( %
\mathcal{A}\right) V_{\Phi }\mathcal{H}_{1}\right] $, we say that six-tuple $\left(
\pi _{\mathcal{X}},\pi _{\varphi },V_{\Phi },W_{\Phi },\left( \mathcal{K}%
_{1},J_{3}\right) ,\left( \mathcal{K}_{2},J_{4}\right) \right) $ is \textit{%
minimal}.

\begin{rem}
\upshape The KSGNS construction associated to the $\varphi $-map $\Phi $
constructed in Theorem \ref{main1} is minimal. Using \eqref{m5} we have
\begin{equation*}
\lbrack \pi _{\mathcal{X}}\left( \mathcal{X}\right) V_{\Phi }\mathcal{H}%
_{1}]=[\pi _{\mathcal{X}}\left( \mathcal{X}\right) \pi _{\varphi }\left(
1\right) V_{\Phi }\mathcal{H}_{1}]=\left[ \Phi \left( \mathcal{X}\alpha
\left( 1\right) \right) \mathcal{H}_{1}\right] =\left[ \Phi \left( %
\mathcal{X}\right) \mathcal{H}_{1}\right] =\mathcal{K}_{2}
\end{equation*}%
and by applying \eqref{m2} we get
\begin{equation*}
\left[ \pi _{\varphi }\left( \mathcal{A}\right) V_{\Phi }\mathcal{H}_{1}%
\right] =\mathcal{K}_{1}.
\end{equation*}
\end{rem}


\begin{rem}
\upshape If $\left( \pi _{\mathcal{X}},\pi _{\varphi },V_{\Phi },W_{\Phi
},\left( \mathcal{K}_{1},J_{3}\right) ,\left( \mathcal{K}_{2},J_{4}\right)
\right) $ is a minimal KSGNS construction associated to the $\varphi $-map $%
\Phi $, then $\mathcal{K}_{1}=\left[ \pi _{\varphi }\left( \mathcal{A}%
\right)^*V_{\Phi }\mathcal{H}_{1}\right] $. Indeed, we obtain that
\begin{align*}
\left[ \pi _{\varphi }\left( \mathcal{A}\right)^*V_{\Phi }\mathcal{H}%
_{1}\right] & =\left[ J_{3}\pi _{\varphi }\left( \mathcal{A}^*\right)
J_{3}V_{\Phi }J_{1}\mathcal{H}_{1}\right] =\left[ J_{3}\pi _{\varphi }\left( %
\mathcal{A}\right) V_{\Phi }\mathcal{H}_{1}\right] \\
& =J_{3}\left[ \pi _{\varphi }\left( \mathcal{A}\right) V_{\Phi }\mathcal{H}%
_{1}\right] =J_{3}\mathcal{K}_{1}=\mathcal{K}_{1}.
\end{align*}
\end{rem}


The following result may be considered as a generalization in the context of maps on Krein $C^*$-modules of \cite[Theorem 4.6]{HHJ}.

\begin{prop}
\label{prop1} Let $\mathcal{X}$ be a Hilbert $\mathcal{A}$-module and $\Phi :%
\mathcal{X}\to \mathcal{L}(\mathcal{H}_{1},\mathcal{H}_{2})$ be a $%
\varphi $-map. If the constructions $\left( \pi _{\mathcal{X}},\pi _{\varphi },V_{\Phi
},W_{\Phi },\left( \mathcal{K}_{1},J_{3}\right) ,\left( \mathcal{K}%
_{2},J_{4}\right) \right) $ and $\left( \pi _{\mathcal{X}}^{\prime },\pi
_{\varphi }^{\prime },V_{\Phi }^{\prime },W_{\Phi }^{\prime },\left( %
\mathcal{K}_{1}^{\prime },J_{3}^{\prime }\right) ,\left( \mathcal{K}%
_{2}^{\prime },J_{4}^{\prime }\right) \right) $ are two minimal KSGNS
construction for $\Phi $, then there are two unitary operators $U_{1}:%
\mathcal{K}_{1}\to \mathcal{K}_{1}^{\prime }$ and $U_{2}:\mathcal{K}%
_{2}\to \mathcal{K}_{2}^{\prime }$ such that

\begin{enumerate}
\item $U_{1}V_{\Phi}=V_{\Phi}^{\prime}$ and $U_{1}\pi_{\varphi
}\left(a\right) U_{1}^{\#}=\pi_{\varphi }^{\prime}\left(a\right)$ for all $%
a\in\mathcal{A}$;

\item $U_{2}W_{\Phi}=W_{\Phi}^{\prime}$ and $U_{2}\pi_{\mathcal{X}}\left(
x\right)U_{1}^{\#}=\pi_{\mathcal{X}}^{\prime}\left(x\right)$ for all $x\in %
\mathcal{X}$.
\end{enumerate}
\end{prop}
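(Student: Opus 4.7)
The plan is the standard uniqueness argument for GNS-type constructions, adapted to the Krein setting: use minimality on both sides to define $U_1$ and $U_2$ on total sets by the tautological rules $\sum \pi_\varphi(a_i) V_\Phi \xi_i \mapsto \sum \pi_\varphi'(a_i) V_\Phi' \xi_i$ and $\sum \pi_{\mathcal{X}}(x_i) V_\Phi \xi_i \mapsto \sum \pi_{\mathcal{X}}'(x_i) V_\Phi' \xi_i$, verify that they preserve the $\mathcal{B}$-valued inner product (so they extend to isometries), and conclude surjectivity from minimality of the primed constructions. The intertwining relations are then checked on the dense subspaces and extended by continuity.

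For $U_1$, using $\pi_\varphi(a)^{*} = \pi_\varphi(\alpha(a)^{*})$, $V_\Phi^{\#} = V_\Phi^{*}$ and $V_\Phi^{\#}\pi_\varphi(a)V_\Phi = \varphi(a)$ (all appearing in the proof of Theorem \ref{main1}), I would rewrite $\bigl\langle \sum \pi_\varphi(a_i) V_\Phi \xi_i,\, \sum \pi_\varphi(a_j) V_\Phi \xi_j\bigr\rangle$ as $\sum_{i,j} \langle \xi_i,\, \varphi(\alpha(a_i)^{*} a_j)\,\xi_j\rangle$, an expression depending only on the abstract data $(a_i,\xi_i)$ and the original map $\varphi$. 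Since the primed construction yields the same expression, $U_1$ is well-defined and isometric, and by the minimality of the primed construction has dense range; hence $U_1$ extends to a unitary. Taking $a=1$ gives $U_1 V_\Phi = V_\Phi'$, while a direct computation on the dense subspace gives $U_1 \pi_\varphi(a) = \pi_\varphi'(a) U_1$. To translate this into $U_1\pi_\varphi(a) U_1^{\#} = \pi_\varphi'(a)$ I need $U_1^{\#} = U_1^{-1}$, which via $U_1^{\#} = J_3 U_1^{*} J_3'$ reduces to showing $J_3' U_1 = U_1 J_3$ on the dense subspace; this follows from the identity $J_3 \pi_\varphi(a) V_\Phi \xi = \pi_\varphi(\alpha(a)) V_\Phi J_1 \xi$ (rearranged from Theorem \ref{main1}(1)) together with its primed counterpart.

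For $U_2$ the same template applies: invoking \eqref{eqn:pi-X and pi-varphi} to replace $\pi_{\mathcal{X}}(x)^{\#}\pi_{\mathcal{X}}(y)$ by $\pi_\varphi(\langle x,y\rangle)$ and then using the formulas for $V_\Phi$, the inner product collapses to $\sum_{i,j}\langle J_1 \xi_i,\,\varphi(\langle x_i,x_j\rangle)\,\xi_j\rangle$, again construction-independent. The intertwining $U_2\pi_{\mathcal{X}}(x) U_1^{\#} = \pi_{\mathcal{X}}'(x)$ then follows by acting on a typical $\pi_\varphi(a) V_\Phi \xi$ and using $\pi_{\mathcal{X}}(x)\pi_\varphi(a) = \pi_{\mathcal{X}}(xa)$, which is \eqref{eq2.3} applied in the minimal setting (where $[\pi_{\mathcal{X}}(\mathcal{X})\mathcal{K}_1] = \mathcal{K}_2$).

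The main obstacle, and the step requiring genuine care, is the identity $U_2 W_\Phi = W_\Phi'$, because $W_\Phi$ is merely a coisometry and $\mathcal{H}_2$ is not a dense image of $\pi_{\mathcal{X}}(\mathcal{X}) V_\Phi \mathcal{H}_1$. The key observation is the pointwise identity $W_\Phi \Phi(x)\xi = \pi_{\mathcal{X}}(x) V_\Phi \xi$, proved by pairing both sides against an arbitrary $\pi_{\mathcal{X}}(y) V_\Phi \eta$: one side equals $\langle \xi,\, J_1 \varphi(\langle x,y\rangle)\,\eta\rangle$ via $W_\Phi^{\#}\pi_{\mathcal{X}}(y) V_\Phi = \Phi(y)$ and $\Phi(x)^{\#}\Phi(y) = \varphi(\langle x,y\rangle)$, and the other equals the same by the inner-product computation above; since both vectors lie in $\mathcal{K}_2$ they coincide. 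Because $W_\Phi$ is a coisometry, $P := W_\Phi^{*} W_\Phi$ is the projection onto $\mathrm{range}(W_\Phi^{*})$, and the relation $W_\Phi^{\#}\pi_{\mathcal{X}}(x) V_\Phi = \Phi(x)$ identifies $\mathrm{range}(W_\Phi^{*}) = [\Phi(\mathcal{X})\mathcal{H}_1]$, a subspace of $\mathcal{H}_2$ independent of the KSGNS construction. Thus $P = P'$; approximating $P\eta$ by finite sums $\sum \Phi(x_i)\xi_i$ for any $\eta\in\mathcal{H}_2$, both $W_\Phi\eta$ and $W_\Phi'\eta$ are then approximated by $\sum \pi_{\mathcal{X}}(x_i) V_\Phi \xi_i$ and $\sum \pi_{\mathcal{X}}'(x_i) V_\Phi' \xi_i$ respectively, which $U_2$ interchanges by construction, yielding $U_2 W_\Phi \eta = W_\Phi' \eta$.
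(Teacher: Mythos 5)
Your proposal is correct and follows essentially the same route as the paper: define $U_1$ and $U_2$ on the total sets $\pi_\varphi(\mathcal{A})V_\Phi\mathcal{H}_1$ and $\pi_{\mathcal X}(\mathcal X)V_\Phi\mathcal{H}_1$, check inner-product preservation by collapsing everything to construction-independent expressions in $\varphi$ (resp. in $\Phi$), and verify the intertwining relations on these dense spans using relation (1) of Theorem \ref{main1} and \eqref{eq2.3}. Your treatment of $U_2W_\Phi=W_\Phi'$ via the projection $W_\Phi^*W_\Phi$ onto $[\Phi(\mathcal X)\mathcal H_1]$ is just the adjoint form of the paper's computation of $W_\Phi^*U_2^*$ on the dense span of $\mathcal K_2'$, so no substantive difference there either.
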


\begin{proof}
From Theorem (1) \ref{main1}, we have
\begin{align*}
&\hspace{-2.5cm}\left\langle \sum_{i=1}^{n}\pi _{\varphi }^{\prime }\left(
a_{i}\right) V_{\Phi }^{\prime }\xi _{i},\sum_{j=1}^{m}\pi _{\varphi
}^{\prime }\left( b_{j}\right) V_{\Phi }^{\prime }\eta _{j}\right\rangle\\
&
=\left\langle \sum_{i=1}^{n}V_{\Phi }^{\prime }\xi
_{i},\sum_{j=1}^{m}V_{\Phi }^{\prime \ast }\pi _{\varphi }^{\prime
}\left( a_{i}\right)^*\pi _{\varphi }^{\prime }\left( b_{j}\right)
V_{\Phi }^{\prime }\eta _{j}\right\rangle \\
& =\sum_{i=1}^{n}\sum_{j=1}^{m}\left\langle \xi _{i},\left(
V_{\Phi }^{\prime }\right) ^{\#}J_{3}\pi _{\varphi }^{\prime }\left(
a_{i}^*\right) J_{3}\pi _{\varphi }^{\prime }\left( b_{j}\right)
V_{\Phi }^{\prime }\eta _{j}\right\rangle \\
& =\sum_{i=1}^{n}\sum_{j=1}^{m}\left\langle \xi _{i},\left(
V_{\Phi }^{\prime }\right) ^{\#}J_{3}\pi _{\varphi }^{\prime }\left(
a_{i}^*\right) \pi _{\varphi }^{\prime }\left( \alpha \left(
b_{j}\right) \right) V_{\Phi }^{\prime }J_{1}\eta _{j}\right\rangle \\
& =\sum_{i=1}^{n}\sum_{j=1}^{m}\left\langle \xi _{i},\left(
V_{\Phi }^{\prime }\right) ^{\#}\pi _{\varphi }^{\prime }\left( \alpha
\left( a_{i}^*\right) b_{j}\right) V_{\Phi }^{\prime }\eta
_{j}\right\rangle \\
& =\sum_{i=1}^{n}\sum_{j=1}^{m}\left\langle \xi _{i},\varphi
\left( a_{i}^{\#}b_{j}\right) \eta _{j}\right\rangle \\
& =\left\langle \sum_{i=1}^{n}\pi _{\varphi }\left( a_{i}\right)
V_{\Phi }\xi _{i},\sum_{j=1}^{m}\pi _{\varphi }\left( b_{j}\right)
V_{\Phi }\eta _{j}\right\rangle ,
\end{align*}%
which implies that there is a unitary operator $U_{1}:\mathcal{K}%
_{1}\to \mathcal{K}_{1}^{\prime }$ such that
\begin{equation*}
U_{1}\left( \sum_{i=1}^{n}\pi _{\varphi }\left( a_{i}\right) V_{\Phi
}\xi _{i}\right) =\sum_{i=1}^{n}\pi _{\varphi }^{\prime }\left(
a_{i}\right) V_{\Phi }^{\prime }\xi _{i}.
\end{equation*}
Then it is easy to check that $U_{1}V_{\Phi }=V_{\Phi }^{\prime }$ and $%
U_{1}\pi _{\varphi }\left( a\right) =\pi _{\varphi }^{\prime }\left(
a\right) U_{1}$ for all $a\in \mathcal{A}$. Also, for each $a_{1}, \cdots , a_{n}, b_{1}, \cdots , b_{m} \in \mathcal{A}$,
$\xi _{1}, \cdots , \xi _{n}, \eta _{1},\cdots , \eta _{m} \in \mathcal{H}_{1}$ and each $a \in \mathcal{A}$
we obtain that
\begin{align*}
U_{1}\pi _{\varphi }\left( a\right) U_{1}^{\#}\left(
\sum_{i=1}^{n}\pi _{\varphi }^{\prime }\left( a_{i}\right) V_{\Phi
}^{\prime }\xi _{i}\right) & =U_{1}\pi _{\varphi }\left( a\right)
J_{3}U_{1}^*J_{3}^{\prime }\left( \sum_{i=1}^{n}\pi _{\varphi
}^{\prime }\left( a_{i}\right) V_{\Phi }^{\prime }\xi _{i}\right) \\
& =U_{1}\pi _{\varphi }\left( a\right) J_{3}U_{1}^*\left(
\sum_{i=1}^{n}\pi _{\varphi }^{\prime }\left( \alpha \left(
a_{i}\right) \right) V_{\Phi }^{\prime }J_{1}\xi _{i}\right) \\
& =U_{1}\pi _{\varphi }\left( a\right) J_{3}\left( \sum_{i=1}^{n}\pi
_{\varphi }\left( \alpha \left( a_{i}\right) \right) V_{\Phi }^{\prime
}J_{1}\xi _{i}\right) \\
& =U_{1}\pi _{\varphi }\left( a\right) \left( \sum_{i=1}^{n}\pi
_{\varphi }\left( a_{i}\right) V_{\Phi }\xi _{i}\right)\\
&=\sum_{i=1}^{n}\pi _{\varphi }^{\prime }\left( aa_{i}\right) V_{\Phi
}^{\prime }\xi _{i} \\
& =\pi _{\varphi }^{\prime }\left( a\right) \left( \sum_{i=1}^{n}\pi
_{\varphi }^{\prime }\left( a_{i}\right) V_{\Phi }^{\prime }\xi _{i}\right) ,
\end{align*}
and so $U_{1}\pi _{\varphi }\left( a\right) U_{1}^{\#}=\pi _{\varphi
}^{\prime }\left( a\right) $ for all $a\in \mathcal{A}$. Also, from (3) of
in Theorem \ref{main1} we observe that
\begin{align*}
&\hspace{-2cm}\left\langle \sum_{i=1}^{n}\pi _{\mathcal{X}}^{\prime }\left(
x_{i}\right) V_{\Phi }^{\prime }\xi _{i},\sum_{j=1}^{m}\pi _{%
\mathcal{X}}^{\prime }\left( y_{j}\right) V_{\Phi }^{\prime }\eta
_{j}\right\rangle\\& =\left\langle \sum_{i=1}^{n}\left( W_{\Phi
}^{^{\prime }}\right) ^{\#}\pi _{\mathcal{X}}^{\prime }\left( x_{i}\right)
V_{\Phi }^{\prime }\xi _{i},\sum_{j=1}^{m}\left( W_{\Phi }^{^{\prime
}}\right) ^{\#}\pi _{\mathcal{X}}^{\prime }\left( y_{j}\right) V_{\Phi
}^{\prime }\eta _{j}\right\rangle \\
& =\sum_{i=1}^{n}\sum_{j=1}^{m}\left\langle \Phi \left(
x_{i}\right) \xi _{i},\Phi \left( y_{j}\right) \eta _{j}\right\rangle \\
& =\left\langle \sum_{i=1}^{n}W_{\Phi }^{\#}\pi _{\mathcal{X}}\left(
x_{i}\right) V_{\Phi }\xi _{i},\sum_{j=1}^{m}W_{\Phi }^{\#}\pi _{%
\mathcal{X}}\left( y_{j}\right) V_{\Phi }\eta _{j}\right\rangle \\
& =\left\langle \sum_{i=1}^{n}\pi _{\mathcal{X}}\left( x_{i}\right)
V_{\Phi }\xi _{i},\sum_{j=1}^{m}\pi _{\mathcal{X}}\left(
y_{j}\right) V_{\Phi }\eta _{j}\right\rangle ,
\end{align*}%
which implies that there is a unitary operator $U_{2}:\mathcal{K}%
_{2}\to \mathcal{K}_{2}^{\prime }$ such that
\begin{equation*}
U_{2}\left( \sum_{i=1}^{n}\pi _{\mathcal{X}}\left( x_{i}\right)
V_{\Phi }\xi _{i}\right) =\sum_{i=1}^{n}\pi _{\mathcal{X}}^{\prime
}\left( x_{i}\right) V_{\Phi }^{\prime }\xi _{i}.
\end{equation*}%
Then by using (3) of Theorem \ref{main1} we obtain that
\begin{align*}
W_{\Phi }^*U_{2}^*\left( \sum_{i=1}^{n}\pi _{\mathcal{X}%
}^{\prime }\left( x_{i}\right) V_{\Phi }^{\prime }\xi _{i}\right) &
=\sum_{i=1}^{n}W_{\Phi }^*\pi _{\mathcal{X}}\left(
x_{i}\right) V_{\Phi }\xi _{i}=\sum_{i=1}^{n}W_{\Phi }^{\#}\pi _{%
\mathcal{X}}\left( x_{i}\right) V_{\Phi }\xi _{i} \\
& =\sum_{i=1}^{n}\Phi \left( x_{i}\right) \xi
_{i}=\sum_{i=1}^{n}\left( W_{\Phi }^{\prime }\right) ^{\#}\pi _{%
\mathcal{X}}^{\prime }\left( x_{i}\right) V_{\Phi }^{\prime }\xi _{i} \\
& =\left( W_{\Phi }^{\prime }\right)^*\sum_{i=1}^{n}\pi _{%
\mathcal{X}}^{\prime }\left( x_{i}\right) V_{\Phi }^{\prime }\xi _{i}
\end{align*}%
and, since $\mathcal{K}_{2}^{\prime }=[\pi _{\mathcal{X}}^{\prime }\left( %
\mathcal{X}\right) ^{\prime }V_{\Phi }^{\prime }\mathcal{H}_{1}]$, and so $%
U_{2}W_{\Phi }=W_{\Phi }^{\prime }$. On the other hand, by applying (1) of
Theorem \ref{main1} and \eqref{eq2.3} for any $a_{1},\cdots ,a_{n}\in %
\mathcal{A}$, $\xi _{1},\cdots ,\xi _{n}\in \mathcal{H}_{1}$ and $x\in %
\mathcal{X}$, we obtain that
\begin{align*}
U_{2}\pi _{\mathcal{X}}\left( x\right) U_{1}^{\#}\left(
\sum_{i=1}^{n}\pi _{\varphi }^{\prime }\left( a_{i}\right) V_{\Phi
}^{\prime }\xi _{i}\right) & =U_{2}\pi _{\mathcal{X}}\left( x\right)
J_{3}U_{1}^*J_{3}^{\prime }\left( \sum_{i=1}^{n}\pi _{\varphi
}^{\prime }\left( a_{i}\right) V_{\Phi }^{\prime }\xi _{i}\right) \\
& =U_{2}\pi _{\mathcal{X}}\left( x\right) \left( \sum_{i=1}^{n}\pi
_{\varphi }\left( a_{i}\right) V_{\Phi }\xi _{i}\right) \\
& =U_{2}\left( \sum_{i=1}^{n}\pi _{\mathcal{X}}\left( xa_{i}\right)
V_{\Phi }\xi _{i}\right) \\
&=\sum_{i=1}^{n}\pi _{\mathcal{X}}^{\prime
}\left( xa_{i}\right) V_{\Phi }\xi _{i} \\
& =\pi _{\mathcal{X}}^{\prime }\left( x\right) \left(
\sum_{i=1}^{n}\pi _{\varphi }^{\prime }\left( a_{i}\right) V_{\Phi
}^{\prime }\xi _{i}\right)
\end{align*}%
and taking into account that $\mathcal{K}_{1}=\left[ \pi _{\varphi }^{\prime
}\left( \mathcal{A}\right) V_{\Phi }^{\prime }\mathcal{H}_{1}\right] $, we
deduce that $\pi _{\mathcal{X}}^{\prime }\left( x\right) =U_{2}\pi _{%
\mathcal{X}}\left( x\right) U_{1}^{\#}$ for all $x\in \mathcal{X}$.
\end{proof}


\section{Covariant $\protect\alpha$-CP maps}

Let $\mathcal{G}$ be a locally compact group and let $\mathcal{X}$ be a full
Hilbert $C^*$-module over a unital $C^*$-algebra $\mathcal{A}$. An
action of $\mathcal{G}$ on $\mathcal{X}$ is a group morphism $\eta $ from $%
\mathcal{G}$ to Aut$\left( \mathcal{X}\right) $, the group of all Hilbert $%
C^*$-module isomorphisms from $\mathcal{X}$ onto $\mathcal{X}$, such
that the map $t\mapsto \eta_{t}\left( x\right) $ from $\mathcal{G}$ to $%
\mathcal{X}$ is continuous for each $x\in \mathcal{X}$. The triple $(%
\mathcal{G},\eta ,\mathcal{X})$ is called a dynamical system on Hilbert $%
C^*$-modules (see,\cite{K, J}).

An action $t\mapsto \eta_{t}$ of $\mathcal{G}$ on $\mathcal{X}$ induces a
unique action $t\mapsto \beta_{t}^{\eta}$ of $\mathcal{G}$ on $\mathcal{A}$
such that $\beta_{t}^{\eta}\left( \left\langle x,y\right\rangle \right)
=\left\langle \eta_{t}\left( x\right) ,\eta _{t}\left( y\right)
\right\rangle $ for all $x,y\in \mathcal{X},t\in \mathcal{G}$; see \cite{K,
J}.

A pseudo-unitary representation of $\mathcal{G}$ on a Krein space $(%
\mathcal{H},J)$ is a map $t\mapsto u_{t}$ from $\mathcal{G}$ to $\mathcal{L}%
\left(\mathcal{H}\right) $ such that $u_{e}=\mathrm{id}_{\mathcal{H}}$, $%
u_{ts}=u_{t}u_{s}$ and $u_{t^{-1}}=u_{t}^{\#}$ for all $s,t\in \mathcal{G}$.
It follows from $u_{t^{-1}}=u_{t}^{\#}$ that $u_{t^{-1}}=Ju_{t}^*J$. So
\[
Ju_{t^{-1}}=u_{t}^*J.
\]

When we deal with usual Hilbert spaces $u_{t}^{\#}$ is replaced by $%
u_{t}^*$, and the pseudo-unitary representation is replaced by unitary
representation.

Let $t\mapsto u_{t}$ and $t\mapsto $ $u_{t}^{\prime}$ be two pseudo-unitary $%
\ast $-representations of $\mathcal{G}$ on Krein spaces $(\mathcal{H}%
_{1},J_{1})$ and $(\mathcal{H}_{2},J_{2})$.

\begin{defn}
\upshape
A $\varphi $-map $\Phi :\mathcal{X}\to \mathcal{L}(\mathcal{H}_{1},%
\mathcal{H}_{2})$ is said to be $\left(u^{\prime},u\right)$-covariant with
respect to $\eta$ if
\[
\Phi \left( \eta_{t}\left( x\right) \right) =u_{t}^{\prime}\Phi \left(
x\right) u_{t}^{\#}
\]
for all $t\in \mathcal{G}$, $x\in \mathcal{X}$, and
\[
\beta_{t}^{\eta}\circ \alpha =\alpha \circ \beta_{t}^{\eta}
\]
for all $t\in \mathcal{G}$.
\end{defn}


\begin{rem}
\upshape
Let $\Phi :\mathcal{X}\to \mathcal{L}(\mathcal{H}_{1},\mathcal{H}%
_{2})$ be a $\varphi$-map. If $\Phi $ is $\left(u^{\prime},u\right)$%
-covariant with respect to $\eta$, then $\varphi$ is $u$-covariant with
respect to $\beta^{\eta}$, which means that
\[
\varphi (\beta_{t}^{\eta}(a)) =u_{t}\varphi (a)u_{t}^*\qquad (a\in %
\mathcal{A},t\in \mathcal{G}).
\]
In fact, for any $x,y\in \mathcal{X}$ and $t\in \mathcal{G}$, we obtain that
\begin{align}
\varphi\left(\beta_{t}^{\eta}\left(\left\langle
x,y\right\rangle\right)\right) &=\varphi\left(\left\langle
\eta_{t}\left(x\right), \eta_{t}\left( y\right) \right\rangle \right)
=\Phi\left(\eta_{t}\left(x\right)\right)^{\#}\Phi\left(\eta_{t}\left(y%
\right)\right) \\
&=u_{t}\Phi\left(x\right)^{\#}\left(u_{t}^{\prime}\right)^{\#}u_{t}^{\prime}%
\Phi\left( y\right) u_{t}^{\#} \\
&=u_{t}\varphi \left(\left\langle x,y\right\rangle\right) u_{t}^{\#}.
\label{eqn:covariant of varphi}
\end{align}
\end{rem}

Let $t\mapsto u_{t}$ and $t\mapsto $ $u_{t}^{\prime}$ be two unitary
representations of $\mathcal{G}$ on Krein spaces $\left( \mathcal{H}%
_{1},J_{1}\right) $ and $\left( \mathcal{H}_{2},J_{2}\right) $.

A representation $\pi_{\mathcal{\mathcal{X}}}$ of $\mathcal{X}$ on Krein
spaces $\left( \mathcal{H}_{1},J_{1}\right) $ and $\left( \mathcal{H}%
_{2},J_{2}\right) $ is $\left( u^{\prime},u\right) $-covariant with respect
to $\eta $ if
\begin{equation*}
\pi_{\mathcal{X}}\left( \eta_{t}\left( x\right) \right) =u_{t}^{\prime }\pi_{%
\mathcal{X}}\left( x\right) u_{t}^{\#}
\end{equation*}%
for all $t\in \mathcal{G},x\in \mathcal{X}$.

If $\pi_{\mathcal{X}}$ is $\left( u^{\prime},u\right) $-covariant with
respect to $\eta ,$ $u_{t}^{\#}=u_{t}^*$ for all $t\in \mathcal{G}$ and
$\pi_{\mathcal{A}}$ is continuous, then $\pi_{\mathcal{A}}$ is $u$-covariant
with respect to $\beta ^{\eta}$ in the sense that $\pi_{\mathcal{A}%
}(\beta_{t}^{\eta}(a))=u_{t}\pi_{\mathcal{A}}(a)u_{t}^{\#}$ for all $a\in %
\mathcal{A},t\in \mathcal{G}$. Indeed, by similar arguments as used in %
\eqref{eqn:covariant of varphi}, we have
\begin{align*}
\pi_{\mathcal{A}}\left(\beta_{t}^{\eta}\left(\left\langle
x,y\right\rangle\right)\right) &=u_{t}\pi_{\mathcal{X}}\left( x\right)
^{\#}\pi_{\mathcal{X}}\left(y\right) u_{t}^{\#} \\
&=u_{t}\pi_{\mathcal{A}}\left( \left\langle x,y\right\rangle
\right)u_{t}^{\#}
\end{align*}%
for all $x,y\in \mathcal{X}$ and $t\in \mathcal{G}$, whence, since $%
\mathcal{X}$ is full and $\pi_{\mathcal{A}}$ is continuous, we deduce that $%
\pi_{\mathcal{A}}$ is $u$-covariant with respect to $\beta^{\eta}$.

From now on, we assume that $(\mathcal{H}_1,J_1)$ is a Krein space and $u$
is simultaneously unitary and pseudo-unitary representation of $\mathcal{G}$
on the Krein space $(\mathcal{H}_1,J_1)$, i.e.,
\[
u_t^*u_t=u_tu_t^*=1,\qquad u_t^{\#}u_t=u_tu_t^{\#}=1
\]
for all $t\in\mathcal{G}$, which also implies that $u_t^*=u_t^{\#}$,
equivalently, $J_1u_t=u_tJ_1$ for all $t\in\mathcal{G}$.

\begin{lem}
\label{lmm:unitary and pseudo-unitary v} Let $\mathcal{X}$ be a Hilbert $\mathcal{A}$-module, $\Phi :\mathcal{X}\to
\mathcal{L}(\mathcal{H}_{1},\mathcal{H}_{2})$ be a $\varphi $-map, which is $%
\left(u^{\prime},u\right)$-covariant with respect to $\eta$ and $\left(%
\mathcal{K}_{1},J_{3}\right)$ be the Krein space constructed in Theorem \ref%
{main1}. Then there is a simultaneously unitary and pseudo-unitary
representation $v$ of $\mathcal{G}$ on $\mathcal{K}_1$ such that
\begin{align} \label{eqn:def of v-t}
v_t(a\otimes\xi+N_\varphi)&=\beta_t^{\eta}(a)\otimes u_t(\xi)+N_{\varphi},
\end{align}
\begin{align} \label{eqn:V-u-v-V}
V_{\Phi}u_{t}&=v_{t}V_{\Phi}
\end{align}
for any $a\in\mathcal{A}$, $\xi\in\mathcal{X}$ and $t\in\mathcal{G}$.
\end{lem}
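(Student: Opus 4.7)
The plan is to define $v_t$ first on the algebraic tensor product $\mathcal{A}\otimes_{\mathrm{alg}}\mathcal{H}_1$ by the bilinear extension of $(a,\xi)\mapsto \beta_t^\eta(a)\otimes u_t(\xi)$, then verify that it preserves the sesquilinear form determining the $\mathcal{B}$-valued inner product on $\mathcal{K}_1$. Once that is established, $v_t$ necessarily maps $N_\varphi$ into itself, hence descends to $\mathcal{A}\otimes_{\mathrm{alg}}\mathcal{H}_1/N_\varphi$ and extends by continuity to an isometry of $\mathcal{K}_1$.

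The central computation is inner-product preservation. Evaluating at two elementary sums and using that $\beta_t^\eta$ is a $\ast$-automorphism together with the covariance hypothesis $\beta_t^\eta\circ\alpha=\alpha\circ\beta_t^\eta$, I can rewrite
\[
\varphi\bigl(\alpha(\beta_t^\eta(a_i))^{*}\beta_t^\eta(b_j)\bigr)=\varphi\bigl(\beta_t^\eta(\alpha(a_i)^{*}b_j)\bigr).
\]
The $u$-covariance of $\varphi$ from \eqref{eqn:covariant of varphi}, combined with the identification $u_t^{\#}=u_t^{*}$ and the unitarity $u_t^{*}u_t=1$, then produces
\[
\sum_{i,j}\langle u_t\xi_i,\varphi(\beta_t^\eta(\alpha(a_i)^{*}b_j))u_t\eta_j\rangle=\sum_{i,j}\langle \xi_i,\varphi(\alpha(a_i)^{*}b_j)\eta_j\rangle,
\]
which coincides with the original inner product. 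Thus $v_t\in\mathcal{L}(\mathcal{K}_1)$ with $v_t^{*}v_t=1$; surjectivity (and so $v_tv_t^{*}=1$) follows from the group homomorphism property $v_{ts}=v_tv_s$, $v_e=\mathrm{id}$, which is immediate from the definition since $\beta^\eta$ is a group action and $u$ is a representation.

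For pseudo-unitarity, I will check on the dense subspace that $J_3v_t=v_tJ_3$, so that $v_t^{\#}=J_3v_t^{*}J_3=v_t^{*}$: applying both sides to $a\otimes\xi+N_\varphi$ yields $\alpha(\beta_t^\eta(a))\otimes J_1u_t\xi+N_\varphi$ and $\beta_t^\eta(\alpha(a))\otimes u_tJ_1\xi+N_\varphi$, and these agree by $\beta_t^\eta\circ\alpha=\alpha\circ\beta_t^\eta$ together with the standing commutation $J_1u_t=u_tJ_1$ coming from $u$ being simultaneously unitary and pseudo-unitary on $(\mathcal{H}_1,J_1)$. Consequently $v_{t^{-1}}=v_t^{-1}=v_t^{*}=v_t^{\#}$, as required. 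Finally, the intertwining identity \eqref{eqn:V-u-v-V} follows from a direct computation on $\xi\in\mathcal{H}_1$:
\[
v_tV_\Phi\xi=v_t(1\otimes J_1\xi+N_\varphi)=\beta_t^\eta(1)\otimes u_tJ_1\xi+N_\varphi=1\otimes J_1u_t\xi+N_\varphi=V_\Phi u_t\xi,
\]
using $\beta_t^\eta(1)=1$ and $u_tJ_1=J_1u_t$.

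The main obstacle is the inner-product-preserving step, since it requires three hypotheses to conspire at once: the multiplicativity and $\ast$-preservation of $\beta_t^\eta$, the intertwining relation $\varphi(\beta_t^\eta(a))=u_t\varphi(a)u_t^{\#}$, and the joint unitary/pseudo-unitary structure of $u_t$ which supplies both $u_t^{\#}=u_t^{*}$ and $u_t^{*}u_t=1$; once these are marshalled into the single identity above, the remaining verifications are essentially formal manipulations on the dense subspace followed by continuous extension.
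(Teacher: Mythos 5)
Your proposal is correct and follows essentially the same route as the paper: establish inner-product preservation on the dense subspace via the $u$-covariance of $\varphi$ together with $u_t^{\#}=u_t^{*}$ and $u_t^{*}u_t=1$, conclude that $v_t$ is a well-defined isometry on $\mathcal{K}_1$, upgrade to unitarity, and verify \eqref{eqn:V-u-v-V} by the same direct computation. The only cosmetic difference is that you obtain pseudo-unitarity from the commutation $J_3v_t=v_tJ_3$ and the group law, while the paper computes $v_t^{*}$ explicitly on elementary tensors; your handling of general finite sums and of the descent through $N_\varphi$ is, if anything, slightly more careful than the paper's single-tensor norm computation.
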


\begin{proof}
For any $t\in\mathcal{G}$, $a\in \mathcal{A}$ and $\xi\in \mathcal{H}_{1}$,
we obtain that
\begin{align*}
\left\|\beta _{t}^{\eta}\left(a\right) \otimes
u_t(\xi)+N_{\varphi}\right\|^2 &=\left\langle \xi,u_{t}^*\varphi
\left(\beta_{t}^{\eta}\left(\alpha(a)^*\right)\beta _{t}^{\eta}
\left(a\right) \right) u_t\left(\xi\right) \right\rangle \\
&=\left\langle \xi,u_t^*u_t\varphi
\left(\alpha(a)^*b\right)u_t^{\#}u_t\left(\xi\right)\right\rangle \\
&=\left\langle a\otimes \xi +N_{\varphi},a\otimes \xi
+N_{\varphi}\right\rangle,
\end{align*}
which implies that there is an isometry operator $v_{t}:\mathcal{K}%
_{1}\to\mathcal{K}_{1}$ such that \eqref{eqn:def of v-t} holds. By
similar arguments, we can easily see that $v_t^*(a\otimes
\xi)=\beta_{-t}^\eta(a)\otimes u_t\xi$ for all $a\in\mathcal{A}$ and $\xi\in%
\mathcal{H}_1$. Therefore, for any $t\in\mathcal{G}$, $v_t$ is
simultaneously unitary and pseudo-unitary. Moreover, we observe that
\begin{align*}
V_{\Phi}u_{t}\left(\xi\right) &=1\otimes J_1u_{t}\left(\xi\right)+N_{\varphi}
=\beta _{t}^{\eta}\left(1\right) \otimes u_{t}\left(J_1\xi\right)
+N_{\varphi} =v_{t}\left(1\otimes J_1\xi +N_{\varphi}\right)\\
&=v_{t}V_{\Phi}\left(\xi \right)
\end{align*}
for all $t\in \mathcal{G},\xi \in \mathcal{H}_{1}$, and so we have %
\eqref{eqn:V-u-v-V}.
\end{proof}

The next result is a variant of \cite[Theorem 3.2]{HJK}.

\begin{thm}
\label{main2} Let $\mathcal{X}$ be a Hilbert $\mathcal{A}$-module, $\Phi :\mathcal{X}\to \mathcal{L}(\mathcal{H}_{1},%
\mathcal{H}_{2})$ be a $\varphi$-map, which is $\left(u^{\prime},u\right)$%
-covariant with respect to $\eta$. Then there are two Krein spaces $\left(%
\mathcal{K}_{1},J_{3}\right)$ and $\left(\mathcal{K}_{2},J_{4}\right)$, two
pseudo-unitary representations $v$ and $v^{\prime }$ of $\mathcal{G}$ on the
Krein spaces $\left(\mathcal{K}_{1},J_{3}\right)$ and $\left(\mathcal{K}%
_{2},J_{4}\right) $, respectively, a representation $\pi_{\varphi}$ of $%
\mathcal{A}$ on $\left(\mathcal{K}_{1},J_{3}\right)$, a $\pi_{\varphi}$%
-representation $\pi_{\mathcal{X}}$ of $\mathcal{X}$ on $\left(\mathcal{K}%
_{1},J_{3}\right) $ and $\left(\mathcal{K}_{2},J_{4}\right) $, which is $%
\left(v^{\prime},v\right)$-covariant and two operators $V_{\Phi }:\mathcal{H}%
_{1}\to \mathcal{K}_{1}$ and $W_{\Phi }:\mathcal{H}_{2}\to %
\mathcal{K}_{2}$ such that

\begin{enumerate}
\item $V_{\Phi }^{\#}=V_{\Phi }^*$, $\pi _{\varphi }(\alpha
(a))V_{\Phi }=J_{3}\pi _{\varphi }(a)V_{\Phi }J_{1}$ for all $a\in %
\mathcal{A}$, and $W_{\Phi }$ is a coisometry with $W_{\Phi }^{\#}=W_{\Phi
}^*$;

\item $\varphi \left( a\right) =V_{\Phi }^{\#}\pi _{\varphi }\left( a\right)
V_{\Phi }$ for all $a\in \mathcal{A}$;

\item $\Phi (x)=W_{\Phi }^{\#}\pi _{\mathcal{X}}(x)V_{\Phi }$ for all $x\in %
\mathcal{X}$ and $a\in \mathcal{A}$;

\item $v_{t}^{\#}=v_{t}^*$ and $\left( v_{t}^{\prime }\right)
^{\#}=\left( v_{t}^{\prime }\right)^*$ for all $t\in \mathcal{G};$

\item $V_{\Phi }u_{t}=v_{t}V_{\Phi }$ and $W_{\Phi }u_{t}^{\prime
}=v_{t}^{\prime }W_{\Phi }$ for all $t\in \mathcal{G}$.
\end{enumerate}
\end{thm}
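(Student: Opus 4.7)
The plan is to bootstrap from the already-established non-covariant KSGNS construction of Theorem \ref{main1} and the specific pseudo-unitary $v$ produced in Lemma \ref{lmm:unitary and pseudo-unitary v}, and then to manufacture the companion pseudo-unitary $v^{\prime}$ on $\mathcal{K}_{2}$ and check the two remaining covariance identities. Concretely, applying Theorem \ref{main1} yields the six-tuple $(\pi_{\mathcal{X}},\pi_{\varphi},V_{\Phi},W_{\Phi},(\mathcal{K}_{1},J_{3}),(\mathcal{K}_{2},J_{4}=\mathrm{id}))$ satisfying clauses (1), (2), (3); and Lemma \ref{lmm:unitary and pseudo-unitary v} delivers a simultaneously unitary and pseudo-unitary representation $v$ of $\mathcal{G}$ on $(\mathcal{K}_{1},J_{3})$ with $V_{\Phi}u_{t}=v_{t}V_{\Phi}$, which disposes of half of (4) and half of (5).

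For the construction of $v^{\prime}$, I would exploit the identity $\mathcal{K}_{2}=[\Phi(\mathcal{X})\mathcal{H}_{1}]$ from the proof of Theorem \ref{main1}. Because $\Phi$ is $(u^{\prime},u)$-covariant, one has
\begin{equation*}
u_{t}^{\prime}\Phi(x)\xi=\Phi(\eta_{t}(x))u_{t}\xi\in [\Phi(\mathcal{X})\mathcal{H}_{1}]=\mathcal{K}_{2},
\end{equation*}
so each $u_{t}^{\prime}$ leaves $\mathcal{K}_{2}$ invariant; together with the unitarity of $u_{t}^{\prime}$ on $\mathcal{H}_{2}$ and the representation property, the adjoint $(u_{t}^{\prime})^{*}=u_{t^{-1}}^{\prime}$ likewise leaves $\mathcal{K}_{2}$ invariant, so $u_{t}^{\prime}$ commutes with the orthogonal projection $P_{\mathcal{K}_{2}}=W_{\Phi}^{*}W_{\Phi}$. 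Define $v_{t}^{\prime}:=u_{t}^{\prime}|_{\mathcal{K}_{2}}$. This is plainly a unitary representation of $\mathcal{G}$ on the Hilbert $\mathcal{B}$-module $\mathcal{K}_{2}$, and since $J_{4}=\mathrm{id}_{\mathcal{K}_{2}}$ the identity $(v_{t}^{\prime})^{\#}=(v_{t}^{\prime})^{*}$ is automatic. The relation $W_{\Phi}u_{t}^{\prime}=v_{t}^{\prime}W_{\Phi}$ then follows from the commutation of $u_{t}^{\prime}$ with $P_{\mathcal{K}_{2}}$, completing (4) and (5). (Alternatively, one can build $v_{t}^{\prime}$ abstractly on the dense subspace $\sum \pi_{\mathcal{X}}(x_{i})V_{\Phi}\xi_{i}$ by the prescription $v_{t}^{\prime}\big(\sum\pi_{\mathcal{X}}(x_{i})V_{\Phi}\xi_{i}\big)=\sum\pi_{\mathcal{X}}(\eta_{t}(x_{i}))V_{\Phi}u_{t}\xi_{i}$, verify well-definedness and isometry using the $\varphi$-map property of $\Phi$ and the $u$-covariance of $\varphi$, and then observe by uniqueness that this agrees with the restriction of $u_{t}^{\prime}$.)

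The main task that remains, and what I expect to be the delicate step, is to verify $\pi_{\mathcal{X}}(\eta_{t}(x))=v_{t}^{\prime}\pi_{\mathcal{X}}(x)v_{t}^{\#}$ on the total set $\pi_{\varphi}(\mathcal{A})V_{\Phi}\mathcal{H}_{1}$ which spans $\mathcal{K}_{1}$. Chasing the formulas, on a vector $\pi_{\varphi}(\beta_{t}^{\eta}(a))V_{\Phi}u_{t}\xi=v_{t}\pi_{\varphi}(a)V_{\Phi}\xi$ one computes
\begin{equation*}
v_{t}^{\prime}\pi_{\mathcal{X}}(x)v_{t}^{\#}\bigl(v_{t}\pi_{\varphi}(a)V_{\Phi}\xi\bigr)=u_{t}^{\prime}\pi_{\mathcal{X}}(x)\pi_{\varphi}(a)V_{\Phi}\xi=u_{t}^{\prime}\Phi(xa)\xi,
\end{equation*}
using $v_{t}^{\#}v_{t}=v_{t}^{*}v_{t}=1$ and the intertwining formula $\pi_{\mathcal{X}}(x)\pi_{\varphi}(a)V_{\Phi}\xi=\Phi(xa)\xi$ from (\ref{m5}); on the other hand, the same formula and the $(u^{\prime},u)$-covariance of $\Phi$ yield
\begin{equation*}
\pi_{\mathcal{X}}(\eta_{t}(x))\pi_{\varphi}(\beta_{t}^{\eta}(a))V_{\Phi}u_{t}\xi=\Phi(\eta_{t}(xa))u_{t}\xi=u_{t}^{\prime}\Phi(xa)u_{t}^{\#}u_{t}\xi=u_{t}^{\prime}\Phi(xa)\xi,
\end{equation*}
so the two expressions agree. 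Extending by linearity and continuity to all of $\mathcal{K}_{1}$ yields the required covariance, and clauses (1)--(3) are simply inherited from Theorem \ref{main1}. The main obstacle is book-keeping the interplay of the two fundamental symmetries $J_{1},J_{3}$ and the relation $V_{\Phi}u_{t}=v_{t}V_{\Phi}$ when converting between $v_{t}^{\#}$ and $v_{t}^{*}$; once one notes that $v$ is simultaneously unitary and pseudo-unitary (so that $v_{t}^{\#}v_{t}=1$), the computation collapses as above.
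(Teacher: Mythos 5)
Your proposal is correct and follows essentially the same route as the paper: clauses (1)--(3) are inherited from Theorem \ref{main1}, $v$ comes from Lemma \ref{lmm:unitary and pseudo-unitary v}, $v'_t$ is obtained by restricting $u'_t$ to the invariant subspace $\mathcal{K}_2=[\Phi(\mathcal{X})\mathcal{H}_1]$, and the $(v',v)$-covariance of $\pi_{\mathcal{X}}$ is checked on the total set $\pi_\varphi(\mathcal{A})V_\Phi\mathcal{H}_1$ via \eqref{m5} and the identity $\eta_t(xa)=\eta_t(x)\beta_t^\eta(a)$. The only cosmetic difference is that you evaluate on vectors $v_t\pi_\varphi(a)V_\Phi\xi$ and use $v_t^{\#}v_t=1$, where the paper computes $v_t^{\#}$ explicitly on $\sum\pi_\varphi(a_i)V_\Phi\xi_i$; the paper also records the (not strictly required) $v$-covariance of $\pi_\varphi$.
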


\begin{proof}
Let $\left( \pi _{\mathcal{X}},\pi _{\varphi },V_{\Phi },W_{\Phi },\left( %
\mathcal{K}_{1},J_{3}\right) ,\left( \mathcal{K}_{2},J_{4}\right) \right) $
be the KSGNS construction associated to the $\varphi $-map $\Phi $
constructed in Theorem \ref{main1}. Then by Lemma \ref{lmm:unitary and
pseudo-unitary v} there is a simultaneously unitary and pseudo-unitary
representation $v$ of $\mathcal{G}$ on $\mathcal{K}_{1}$ satisfying %
\eqref{eqn:def of v-t} and \eqref{eqn:V-u-v-V}. Moreover, $\pi _{\varphi }$
is $v$-covariant with respect to $\beta ^{\eta }$, since $%
v_{t}^{\#}=v_{-t}=v_{t}^*$ and
\begin{align*}
v_{t}\pi _{\varphi }\left( a\right) v_{t}^{\#}\left( b\otimes \xi
+N_{\varphi }\right) & =v_{t}\pi _{\varphi }\left( a\right) \left( \beta
_{t^{-1}}^{\eta }\left( b\right) \otimes u_{t}^*\xi +N_{\varphi
}\right)\\
&=v_{t}\left( a\beta _{t^{-1}}^{\eta }\left( b\right) \otimes
u_{t}^*\xi +N_{\varphi }\right) \\
& =\beta _{t}^{\eta }\left( a\beta _{t^{-1}}^{\eta }\left( b\right) \right)
\otimes u_{t}u_{t}^*\xi +N_{\varphi }\\
&=\beta _{t}^{\eta }\left(
a\right) b\otimes \xi +N_{\varphi } \\
& =\pi _{\varphi }\left( \beta _{t}^{\eta }\left( a\right) \right)
\end{align*}%
for all $a,b\in A$, $\xi \in \mathcal{H}_{1}$ and $t\in \mathcal{G}$. Since $%
\Phi $ is $(u^{\prime },u)$-covariant,
\begin{equation*}
u_{t}^{\prime }\left( \sum_{i=1}^{n}\Phi \left( x_{i}\right) \xi
_{i}\right) =\sum_{i=1}^{n}{}\Phi \left( \eta _{t}\left(
x_{i}\right) \right) u_{t}\xi _{i}
\end{equation*}%
for all $t\in \mathcal{G}$, $x_{i}\in \mathcal{X}$ and $\xi _{i}\in %
\mathcal{H}_{1}\,\,(i=1,\cdots ,n)$, hence $\left[ \Phi (\mathcal{X})%
\mathcal{H}_{1}\right] =\mathcal{K}_{2}$ is invariant under $u_{t}^{\prime }$
as well as $(u_{t}^{\prime })^*$. Therefore, since $W_{\Phi }$ is the
projection onto $\left[ \Phi (\mathcal{X})\mathcal{H}_{1}\right] $, we have $%
u_{t}^{\prime }W_{\Phi }=W_{\Phi }u_{t}^{\prime }$ on $\mathcal{K}_{2}$. Let
$v_{t}^{\prime }=u_{t}^{\prime }|_{\mathcal{K}_{2}}$ for all $t\in \mathcal{G%
}$. Then $t\mapsto v_{t}^{\prime }$ is a unitary representation of $\mathcal{%
G}$ on $\mathcal{K}_{2}$ such that $W_{\Phi }u_{t}^{\prime }=v_{t}^{\prime
}W_{\Phi }$ and since $J_{4}=\mathrm{id}_{\mathcal{K}_{2}}$, $v^{\prime }$
also can be considered as a pseudo-unitary representation of $\mathcal{G}$
on $\left( \mathcal{K}_{2},J_{4}\right) $. On the other hand, for any $t\in
\mathcal{G}$ and $x\in \mathcal{X}$, we obtain that
\begin{align*}
v_{t}^{\#}\left( \sum_{i=1}^{n}\pi _{\varphi }\left( a_{i}\right)
V_{\Phi }\xi _{i}\right) & =v_{t}^*\left(
\sum_{i=1}^{n}a_{i}\otimes J_{1}\xi _{i}\right)
=\sum_{i=1}^{n}\beta _{t^{-1}}^{\eta }(a_{i})\otimes u_{t}^*J_{1}\xi _{i} \\
& =\sum_{i=1}^{n}\pi _{\varphi }\left( \beta _{t^{-1}}^{\eta
}(a_{i})\right) V_{\Phi }u_{t}^*\xi _{i}
\end{align*}%
and, since $\eta _{t}\left( xa\right) =\eta _{t}\left( x\right) \beta
_{t}^{\eta }\left( a\right) $, for any $a_{1},\cdots ,a_{n}\in \mathcal{A}$
and $\xi _{1},\cdots ,\xi _{n}\in \mathcal{H}_{1}$, we have
\begin{align*}
v_{t}^{\prime }\pi _{\mathcal{X}}\left( x\right) v_{t}^{\#}\left(
\sum_{i=1}^{n}\pi _{\varphi }\left( a_{i}\right) V_{\Phi }\xi
_{i}\right) & =v_{t}^{\prime }\pi _{\mathcal{X}}\left( x\right) \left(
\sum_{i=1}^{n}\pi _{\varphi }\left( \beta _{t^{-1}}^{\eta
}(a_{i})\right) V_{\Phi }u_{t}^*\xi _{i}\right) \\
& =v_{t}^{\prime }\left( \sum_{i=1}^{n}\Phi \left( x\beta
_{t^{-1}}^{\eta }(a_{i})\right) u_{t}^*\xi _{i}\right) \\
& =\sum_{i=1}^{n}\Phi \left( \eta _{t}\left( x\beta _{t^{-1}}^{\eta
}(a_{i})\right) \right) u_{t}u_{t}^*\xi
_{i}\\
&=\sum_{i=1}^{n}\Phi \left( \eta _{t}\left( x\right) a_{i}\right)
\xi _{i} \\
& =\pi _{\mathcal{X}}\left( \eta _{t}\left( x\right) \right) \left(
\sum_{i=1}^{n}\pi _{\varphi }\left( a_{i}\right) V_{\Phi }\xi
_{i}\right) ,
\end{align*}%
which implies that $\pi _{\mathcal{X}}$ is $\left( v^{\prime },v\right) $%
-covariant, since $\left[ \pi _{\varphi }\left( \mathcal{A}\right) V_{\Phi }%
\mathcal{H}_{1}\right] =\mathcal{K}_{1}$.
\end{proof}

We say that $(\pi _{\Phi },\pi _{\varphi },V_{\Phi },W_{\Phi },v,v^{\prime
},\left( \mathcal{K}_{1},J_{3}\right) ,\left( \mathcal{K}_{2},J_{4}\right) )$
is a covariant KSGNS construction associated to the $\varphi $-map $\Phi $
being $\left( u^{\prime },u\right) $-covariant with respect to $\eta $. If $%
\mathcal{K}_{2}=[\pi _{\Phi }\left( \mathcal{X}\right) V_{\varphi }%
\mathcal{H}_{1}]$ and $\mathcal{K}_{1}=\left[ \pi _{\varphi }\left( %
\mathcal{A}\right) V_{\Phi }\mathcal{H}_{1}\right] $, we say that $(\pi
_{\Phi },\pi _{\varphi },V_{\Phi },W_{\Phi },v,v^{\prime },$ $\left( %
\mathcal{K}_{1},J_{3}\right) ,\left( \mathcal{K}_{2},J_{4}\right) )$ is
\textit{minimal}.

\noindent The next proposition is a variant of \cite[Theorem 3.5]{HJK}.

\begin{prop}
Suppose that $\mathcal{X}$ is a Hilbert $\mathcal{A}$-module, $\Phi :\mathcal{X}\to \mathcal{L}(\mathcal{H}_{1},\mathcal{H}%
_{2})$ is a $\varphi $-map and $\left( u^{\prime },u\right) $-covariant with
respect to $\eta $. If $(\pi _{\Phi },\pi _{\varphi },V_{\Phi },W_{\Phi
},v,v^{\prime },\left( \mathcal{K}_{1},J_{3}\right) ,\left( \mathcal{K}%
_{2},J_{4}\right) )$ and $(\pi _{\Phi }^{\prime },\pi _{\varphi }^{\prime
},V_{\Phi }^{\prime },W_{\Phi }^{\prime },w,w^{\prime },$ $\left( \mathcal{K}%
_{1}^{\prime },J_{3}^{\prime }\right) ,\left( \mathcal{K}_{2}^{\prime
},J_{4}^{\prime }\right) )$ are two minimal covariant KSGNS constructions
for $\Phi $, then there exist two unitary operators $U_{1}:$ $\mathcal{K}%
_{1}\to \mathcal{K}_{1}^{\prime }$ and $U_{2}:$ $\mathcal{K}%
_{2}\to \mathcal{K}_{2}^{\prime }$ such that

\begin{enumerate}
\item $U_{1}V_{\Phi}=V_{\Phi}^{\prime},$ $U_{1}\pi_{\varphi}\left(a\right)
U_{1}^{\#}=\pi_{\varphi}^{\prime}\left(a\right) $ for all $a\in \mathcal{A} $
and $U_{1}v_{t}U_{1}^{\#}=w_{t}$ for all $t\in \mathcal{G}$

\item $U_{2}W_{\Phi}=W_{\Phi}^{\prime},$ $U_{2}\pi_{\mathcal{X}%
}\left(x\right) U_{1}^{\#}=\pi_{\mathcal{X}}^{\prime}\left(x\right) $ for
all $x\in \mathcal{X}$, and $U_{2}v_{t}^{\prime}U_{2}^{\#}=w_{t}^{\prime}$
for all $t\in \mathcal{G}$.
\end{enumerate}
\end{prop}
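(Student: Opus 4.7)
The plan is to reduce this statement to the non-covariant uniqueness result (Proposition \ref{prop1}) and then upgrade the two unitaries it supplies by checking that they already intertwine the pseudo-unitary representations. More precisely, applying Proposition \ref{prop1} to the two underlying (non-covariant) minimal KSGNS constructions, I obtain unitaries $U_{1}:\mathcal{K}_{1}\to \mathcal{K}_{1}^{\prime}$ and $U_{2}:\mathcal{K}_{2}\to \mathcal{K}_{2}^{\prime}$ which are given on dense subspaces by
\[
U_{1}\Bigl(\sum_{i=1}^{n}\pi_{\varphi}(a_{i})V_{\Phi}\xi_{i}\Bigr)=\sum_{i=1}^{n}\pi_{\varphi}^{\prime}(a_{i})V_{\Phi}^{\prime}\xi_{i},\qquad
U_{2}\Bigl(\sum_{i=1}^{n}\pi_{\mathcal{X}}(x_{i})V_{\Phi}\xi_{i}\Bigr)=\sum_{i=1}^{n}\pi_{\mathcal{X}}^{\prime}(x_{i})V_{\Phi}^{\prime}\xi_{i},
\]
and which already satisfy everything in (1) and (2) except the intertwining with $v_{t},w_{t},v_{t}^{\prime},w_{t}^{\prime}$.

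Next, I would verify $U_{1}v_{t}=w_{t}U_{1}$ directly on the dense subspace spanned by elements of the form $\sum\pi_{\varphi}(a_{i})V_{\Phi}\xi_{i}$. Using, on one hand, the covariance $v_{t}\pi_{\varphi}(a)v_{t}^{\#}=\pi_{\varphi}(\beta_{t}^{\eta}(a))$ established in the proof of Theorem \ref{main2} (and its primed analogue for $w_{t}$), and on the other hand the intertwining $V_{\Phi}u_{t}=v_{t}V_{\Phi}$ (and $V_{\Phi}^{\prime}u_{t}=w_{t}V_{\Phi}^{\prime}$) from Lemma \ref{lmm:unitary and pseudo-unitary v}, both sides reduce to $\sum \pi_{\varphi}^{\prime}(\beta_{t}^{\eta}(a_{i}))V_{\Phi}^{\prime}u_{t}\xi_{i}$, giving $U_{1}v_{t}=w_{t}U_{1}$. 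An entirely parallel computation on vectors $\sum\pi_{\mathcal{X}}(x_{i})V_{\Phi}\xi_{i}$, using the $(v^{\prime},v)$-covariance $v_{t}^{\prime}\pi_{\mathcal{X}}(x)v_{t}^{\#}=\pi_{\mathcal{X}}(\eta_{t}(x))$ (and its primed version for $w_{t}^{\prime}$), yields $U_{2}v_{t}^{\prime}=w_{t}^{\prime}U_{2}$.

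Finally, I need to rewrite these intertwiners in the $\#$-form stated, i.e., $U_{1}v_{t}U_{1}^{\#}=w_{t}$ and $U_{2}v_{t}^{\prime}U_{2}^{\#}=w_{t}^{\prime}$. For this it suffices to show that $U_{1}$ respects the fundamental symmetries, $U_{1}J_{3}=J_{3}^{\prime}U_{1}$ (and analogously $U_{2}J_{4}=J_{4}^{\prime}U_{2}$, which is trivial since $J_{4},J_{4}^{\prime}$ are identities). The relation $U_{1}J_{3}=J_{3}^{\prime}U_{1}$ follows on the dense subspace by combining $J_{3}\pi_{\varphi}(a)=\pi_{\varphi}(\alpha(a))J_{3}$, the analogous formula for $\pi_{\varphi}^{\prime}$, and the identity $J_{3}V_{\Phi}=V_{\Phi}J_{1}$ (together with its primed version) proved in Theorem \ref{main1}. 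Combined with $v_{t}^{\#}=v_{t}^{*}$ (hence $v_{t}J_{3}=J_{3}v_{t}$, and similarly for $w_{t}$), one concludes $U_{1}^{\#}=J_{3}U_{1}^{*}J_{3}^{\prime}=U_{1}^{-1}$ as a map $\mathcal{K}_{1}^{\prime}\to\mathcal{K}_{1}$, which converts the bare intertwiner $U_{1}v_{t}=w_{t}U_{1}$ into the required conjugation formula; the argument for $U_{2}$ is identical.

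The main obstacle, although not deep, is careful bookkeeping of the $\#$ versus $*$ conventions on the various Krein modules: one must keep track of the fundamental symmetries $J_{3},J_{3}^{\prime},J_{4},J_{4}^{\prime}$ and verify that the hypothesis $v_{t}^{\#}=v_{t}^{*}$ (and its analogues) exactly accounts for the discrepancy between the intertwining $U_{1}v_{t}=w_{t}U_{1}$ that falls out of the computation and the $\#$-conjugation formula demanded by the statement. Everything else is a routine extension of the proof of Proposition \ref{prop1} using minimality.
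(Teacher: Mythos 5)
Your proposal is correct and follows essentially the same route as the paper: invoke the non-covariant uniqueness result to get $U_1,U_2$, then verify the extra intertwining relations on the dense subspaces $[\pi_\varphi(\mathcal{A})V_\Phi\mathcal{H}_1]$ and $[\pi_{\mathcal{X}}(\mathcal{X})V_\Phi\mathcal{H}_1]$ using the covariance of $\pi_\varphi$ and $\pi_{\mathcal{X}}$ together with $V_\Phi u_t=v_tV_\Phi$. The paper computes $U_1v_tU_1^{\#}$ (and $U_2v_t'U_2^{\#}$) directly on those subspaces rather than first establishing the bare intertwiner and then checking $U_1J_3=J_3'U_1$, but this is only a presentational difference and your bookkeeping of the $\#$ versus $*$ conventions is sound.
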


\begin{proof}
It is easy to see that the representations $(\pi_{\mathcal{X}},\pi_{\varphi},V_{\Phi},W_{\Phi}, \left(%
\mathcal{K}_{1},J_{3}\right),\left(\mathcal{K}_{2},J_{4}\right))$ and $(\pi_{%
\mathcal{X}}^{\prime},\pi_{\varphi}^{\prime},V_{\Phi}^{\prime},
W_{\Phi}^{\prime},\left(\mathcal{K}_{1}^{\prime},J_{3}^{\prime}\right),$ $%
\left(\mathcal{K}_{2}^{\prime},J_{4}^{\prime}\right) )$ are two minimal
Stinespring representations for $\Phi $. Let $U_{1}$ and $U_{2}$ the unitary
operators defined in Proposition \ref{prop1}. Then for any $t\in \mathcal{G}%
,a_{1},\cdots,a_{n}\in \mathcal{A}$, $\xi_{1},\cdots,\xi_{n}\in \mathcal{H}%
_{1}$, we have
\begin{align*}
U_{1}v_{t}U_{1}^{\#}\left(\sum_{i=1}^{n}
\pi_{\varphi}^{\prime}\left(a_{i}\right) V_{\Phi}^{\prime}\xi_{i}\right)
&=U_{1}\left(\sum_{i=1}^{n}
\pi_{\varphi}\left(\beta_{t}^{\eta}\left(a_{i}\right)\right)
v_{t}V_{\Phi}\xi_{i}\right) \\
&=U_{1}\left(\sum_{i=1}^{n}\pi_{\varphi}\left(\beta_{t}^{\eta
}\left(a_{i}\right) \right) V_{\Phi}u_{t}\xi_{i}\right) \\
&=\sum_{i=1}^{n}w_{t}\pi_{\varphi}^{\prime}\left(a_{i}\right)
w_{t}^{\#}V_{\Phi}^{\prime}u_{t}\xi_{i} \\
&=w_{t}\left(\sum_{i=1}^{n}\pi_{\varphi}^{\prime}\left(a_{i}\right)
V_{\Phi}^{\prime}\xi_{i}\right),
\end{align*}
which implies that $U_{1}v_{t}U_{1}^{\#}=w_{t}$ for all $t\in \mathcal{G}$.
On the other hand, since $J_4=\mathrm{id}_{\mathcal{K}_2}$, from \eqref{m5}
we obtain that
\begin{align*}
v_{t}^{\prime}U_{2}^{\#}\left(J_{2}^{\prime}\pi_{\mathcal{X}%
}^{\prime}\left(x\right) V_{\Phi}^{\prime}\xi\right)
&=v_{t}^{\prime}\left(\pi_{\mathcal{X}}\left(x\right) V_{\Phi}\xi\right)
=v_{t}^{\prime}\left(\Phi(x)\xi\right)\\
& =\Phi(\eta_t(x))u_t\xi =\pi_{%
\mathcal{X}}\left(\eta_t(x)\right) V_{\Phi}u_t\xi
\end{align*}
and so from \eqref{eqn:pi-X and pi-varphi} for any $%
x_{1}, \cdots, x_{n}, y_{1}, \cdots, y_{m} \in \mathcal{X}$, $%
\xi_{1}, \cdots, \xi_{n}, \zeta_{1}, \cdots, \zeta_{n} \in \mathcal{H}_{1}$ and $t\in \mathcal{G}$ we
obtain that
\begin{align*}
&\hspace{-2.5cm}\left\langle
U_{2}v_{t}^{\prime}U_{2}^{\#}\left(J_{2}^{\prime}\sum_{i=1}^{n}\pi_{%
\mathcal{X}}^{\prime}\left(x_{i}\right)
V_{\Phi}^{\prime}\xi_{i}\right),\sum_{j=1}^{m}\pi_{\mathcal{X}%
}^{\prime}\left(y_{j}\right) V_{\Phi}^{\prime}\zeta _{j}\right\rangle \\
&=\sum_{i=1}^{n}\sum_{j=1}^{m} \left\langle J_3\pi_{%
\mathcal{X}}\left(y_{j}\right)^{\#}\pi_{\mathcal{X}}\left(\eta_{t}%
\left(x_{i}\right)\right) V_{\Phi}u_{t}\xi_{i},V_{\Phi}\zeta_{j}\right\rangle
\\
&=\sum_{i=1}^{n}\sum_{j=1}^{m} \left\langle
J_3\pi_{\varphi}\left(\left\langle y_{j},\eta_{t}\left(x_{i}\right)
\right\rangle\right) V_{\Phi}u_{t}\xi_{i},V_{\Phi}\zeta_{j}\right\rangle \\
&=\sum_{i=1}^{n}\sum_{j=1}^{m} \left\langle
V_{\Phi}^{\#}\pi_{\varphi}\left(\left\langle
y_{j},\eta_{t}(x_{i})\right\rangle\right)
V_{\Phi}u_{t}\xi_{i},J_1\zeta_{j}\right\rangle \\
&=\sum_{i=1}^{n}\sum_{j=1}^{m} \left\langle
\varphi\left(\left\langle y_{j},\eta_{t}\left(x_{i}\right)
\right\rangle\right) u_{t}\xi_{i},J_1\zeta_{j}\right\rangle \\
&=\sum_{i=1}^{n}\sum_{j=1}^{m} \left\langle \pi_{\mathcal{X}%
}^{\prime}\left(y_{j}\right)^{\#}\pi_{\mathcal{X}}^{\prime}\left(\eta_{t}%
\left(x_{i}\right) \right) V_{\Phi}^{\prime}u_{t}\xi_{i},
V_{\Phi}^{\prime}J_1\zeta_{j}\right\rangle \\
&=\left\langle w_{t}^{\prime}\sum_{i=1}^{n}J_{2}^{\prime}\pi_{%
\mathcal{X}}^{\prime}\left(x_{i}\right) V_{\Phi}\xi_{i},
\sum_{j=1}^{m}\pi_{\mathcal{X}}^{\prime}\left(y_{j}\right)
V_{\Phi}^{\prime}\zeta_{j}\right\rangle
\end{align*}
and then taking into account that $\left[ J_{2}^{\prime}\pi_{\mathcal{X}%
}^{\prime}\left(\mathcal{X}\right) V_{\Phi}^{\prime}\mathcal{H}_{1}\right]
=J_{2}^{\prime}\left[ \pi_{\mathcal{X}}^{\prime}\left(\mathcal{X}\right)
V_{\Phi}^{\prime}\mathcal{H}_{1}\right] =\mathcal{K}_{2}^{\prime }, $ we
deduce that $U_{2}v_{t}^{\prime}U_{2}^{\#}=w_{t}^{\prime}$ for all $t\in
\mathcal{G}$.
\end{proof}

Let $\mathcal{G}$ be a locally compact group with a left Haar measure $dt$
and the modular function $\Delta :\mathcal{G}\to (0,\infty )$. Let $%
\mathcal{X}$ be a full Hilbert $C^*$-module over a unital $C^*$%
-algebra $\mathcal{A}$ and $\eta $ an action of $\mathcal{G}$ on $\mathcal{X}
$.

The linear space $C_{c}(\mathcal{G},\mathcal{X})$ of all continuous
functions from $\mathcal{G}$ to $\mathcal{X}$ with compact support has a
structure of pre-Hilbert $\mathcal{G}\times_{\beta^{\eta}}\mathcal{A}$%
-module with the action of $\mathcal{G}\times_{\beta^{\eta}}\mathcal{A}$ on $%
C_{c}(\mathcal{G},\mathcal{X})$ given by
\begin{equation*}
\left(\widehat{x}f\right) \left(s\right) =\int_{\mathcal{G}}\widehat{%
x}\left(t\right) \beta_{t}^{\eta}\left(f\left(t^{-1}s\right) \right) dt
\end{equation*}
for all $\widehat{x}\in C_{c}(\mathcal{G},\mathcal{X})$, $f\in C_{c}(%
\mathcal{G},\mathcal{A})$ and the inner product given by
\begin{equation} \label{eqn:inner product of closed product}
\left\langle \widehat{x},\widehat{y}\right\rangle \left(s\right)
=\int_{\mathcal{G}}\beta_{t^{-1}}^{\eta}\left(\left\langle \widehat{x%
}(t),\widehat{y}\left(ts\right) \right\rangle \right) dt.
\end{equation}
The crossed product of $\mathcal{X}$ by $\eta $, denoted by $\mathcal{G}%
\times_{\eta}\mathcal{X}$, is the Hilbert $\mathcal{G}\times_{\beta ^{\eta}}%
\mathcal{A}$-module obtained by the completion of the pre-Hilbert $\mathcal{G%
}\times_{\beta^{\eta}}\mathcal{A}$-module $C_{c}(\mathcal{G},\mathcal{X})$
(see, for example, \cite{K}).

Suppose that $\varphi :\mathcal{A}\to \mathcal{L}(\mathcal{H}_{1})$
is an $\alpha $-CP map such that the constant $M(a)$ from Definition \ref%
{def4} (iii) is of the form $K(a)\left\Vert a\right\Vert $ with $K(a)>0.$

If $\varphi $ is $u$-covariant with respect to $\beta ^{\eta }$, then there
is a unique $\widetilde{\alpha }$-CP map $\widetilde{\varphi }:\mathcal{G}%
\times _{\beta ^{\eta }}\mathcal{A}\to \mathcal{L}(\mathcal{H}_{1})$
such that%
\begin{equation*}
\widetilde{\varphi }\left( f\right) =\int_{\mathcal{G}}\varphi
\left( f(t)\right) u_{t}dt
\end{equation*}%
for all $f\in C_{c}(\mathcal{G},\mathcal{A})$, where $\widetilde{\alpha }$ $%
\left( f\right) =\alpha \circ f$ for all $f\in C_{c}(\mathcal{G},\mathcal{A}%
) $ (it is similar to \cite[Theorem 4.3]{HJ}). Moreover, if $\left( \pi
_{\varphi },V_{\Phi },v_{t},\left( \mathcal{K}_{1},J_{3}\right) \right) $ is
the minimal KSGNS construction associated to $\varphi $, then $\left(
\widehat{\pi _{\varphi }},V_{\Phi },\left( \mathcal{K}_{1},J_{3}\right)
\right) $ is the minimal KSGNS construction associated to $%
\widetilde{\varphi }$, where $\widehat{\pi _{\varphi }}\left( f\right)
=\int_{\mathcal{G}}\pi _{\varphi }\left( f(t)\right) v_{t}dt$ for
all $f\in C_{c}(\mathcal{G},\mathcal{A})$. The next result may be compared with
\cite[Corollary 4.3]{HJK}.

\begin{thm}
Let $\mathcal{X}$ be a Hilbert $\mathcal{A}$-module, $\Phi :\mathcal{X}\to \mathcal{L}(\mathcal{H}_{1},\mathcal{H}%
_{2})$ be a $\varphi$-map for an $\alpha$-CP map $\varphi:\mathcal{A}\mapsto%
\mathcal{H}_1$ and $\left(u^{\prime},u\right)$-covariant with respect to $%
\eta $. Then there exist a $\widehat{\pi_{\varphi}}$-representation $%
\widehat{\pi_{\mathcal{X}}}$ of $\mathcal{G}\times_{\eta}\mathcal{X}$ on the
Krein spaces $\left(\mathcal{K}_{1},J_{3}\right) $ and $\left(\mathcal{K}%
_{2},J_{4}\right)$, and a unique $\widetilde{\varphi}$-map $\widetilde{\Phi}%
: \mathcal{G}\times _{\eta}\mathcal{X}\to \mathcal{L}(\mathcal{H}%
_{1},\mathcal{H}_{2})$ for the $\widetilde{\alpha}$-CP map $\widetilde{%
\varphi}$ such that
\begin{eqnarray*}
\widetilde{\Phi}\left(\widehat{x}\right) =W_{\Phi}^{\#}\widehat{\pi_{%
\mathcal{X}}}\left(\widehat{x}\right) V_{\Phi} =\int_{\mathcal{G}%
}\Phi\left(\widehat{x}\left(t\right) )\right) u_{t}dt
\end{eqnarray*}
for all $\widehat{x}\in C_{c}(\mathcal{G},\mathcal{X})$.
\end{thm}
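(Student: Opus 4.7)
The plan is to promote the minimal covariant KSGNS construction of Theorem \ref{main2} up to the crossed product. Fix
\[
(\pi_{\mathcal{X}},\pi_{\varphi},V_{\Phi},W_{\Phi},v,v^{\prime},(\mathcal{K}_{1},J_{3}),(\mathcal{K}_{2},J_{4}))
\]
a minimal covariant KSGNS construction for $\Phi$, so that $\pi_{\mathcal{X}}$ is $(v^{\prime},v)$-covariant and $\pi_{\varphi}$ is $v$-covariant with respect to $\beta^{\eta}$; and recall that $\widehat{\pi_{\varphi}}(f)=\int_{\mathcal{G}}\pi_{\varphi}(f(t))v_{t}\,dt$ is the representation of $\mathcal{G}\times_{\beta^{\eta}}\mathcal{A}$ on $(\mathcal{K}_{1},J_{3})$ recalled immediately before the statement. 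For $\widehat{x}\in C_{c}(\mathcal{G},\mathcal{X})$ I would set, as a weak integral,
\[
\widehat{\pi_{\mathcal{X}}}(\widehat{x})=\int_{\mathcal{G}}\pi_{\mathcal{X}}(\widehat{x}(t))\,v_{t}\,dt\in\mathcal{L}(\mathcal{K}_{1},\mathcal{K}_{2}),\qquad \widetilde{\Phi}(\widehat{x})=\int_{\mathcal{G}}\Phi(\widehat{x}(t))\,u_{t}\,dt.
\]

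The decisive step is the identity
\[
\widehat{\pi_{\mathcal{X}}}(\widehat{x})^{\#}\,\widehat{\pi_{\mathcal{X}}}(\widehat{y})=\widehat{\pi_{\varphi}}(\langle\widehat{x},\widehat{y}\rangle)\qquad(\widehat{x},\widehat{y}\in C_{c}(\mathcal{G},\mathcal{X})),
\]
with $\langle\widehat{x},\widehat{y}\rangle$ the crossed-product inner product \eqref{eqn:inner product of closed product}. I would prove it by first using that $v_{t}$ is simultaneously unitary and pseudo-unitary, so $v_{t}^{\ast}=v_{t}^{\#}=v_{t^{-1}}$, and $J_{4}=\mathrm{id}$, giving
\[
\widehat{\pi_{\mathcal{X}}}(\widehat{x})^{\#}\widehat{\pi_{\mathcal{X}}}(\widehat{y})=\int_{\mathcal{G}}\int_{\mathcal{G}}v_{t^{-1}}\,\pi_{\mathcal{X}}(\widehat{x}(t))^{\#}\,\pi_{\mathcal{X}}(\widehat{y}(s))\,v_{s}\,dt\,ds.
\]
Then \eqref{eqn:pi-X and pi-varphi} collapses the middle factor to $\pi_{\varphi}(\langle\widehat{x}(t),\widehat{y}(s)\rangle)$, and the $v$-covariance identity $v_{t^{-1}}\pi_{\varphi}(a)=\pi_{\varphi}(\beta_{t^{-1}}^{\eta}(a))v_{t^{-1}}$ proved inside Theorem \ref{main2} turns this into $\iint\pi_{\varphi}(\beta_{t^{-1}}^{\eta}(\langle\widehat{x}(t),\widehat{y}(s)\rangle))\,v_{t^{-1}s}\,dt\,ds$. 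The substitution $s=tr$ in the inner integral (left invariance of $dt$) and Fubini recover exactly $\int_{\mathcal{G}}\pi_{\varphi}(\langle\widehat{x},\widehat{y}\rangle(r))v_{r}\,dr=\widehat{\pi_{\varphi}}(\langle\widehat{x},\widehat{y}\rangle)$, by the definition of the inner product on $\mathcal{G}\times_{\eta}\mathcal{X}$.

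Specialising to $\widehat{x}=\widehat{y}$ gives the $C^{\ast}$-type bound $\|\widehat{\pi_{\mathcal{X}}}(\widehat{x})\|^{2}\le\|\widehat{\pi_{\varphi}}(\langle\widehat{x},\widehat{x}\rangle)\|$, so $\widehat{\pi_{\mathcal{X}}}$ extends by continuity to $\mathcal{G}\times_{\eta}\mathcal{X}$ and remains a $\widehat{\pi_{\varphi}}$-representation. Using $V_{\Phi}u_{t}=v_{t}V_{\Phi}$ from Theorem \ref{main2}(5) and $\Phi(x)=W_{\Phi}^{\#}\pi_{\mathcal{X}}(x)V_{\Phi}$ from Theorem \ref{main1}(3), one then computes for $\widehat{x}\in C_{c}(\mathcal{G},\mathcal{X})$
\[
W_{\Phi}^{\#}\widehat{\pi_{\mathcal{X}}}(\widehat{x})V_{\Phi}=\int_{\mathcal{G}}W_{\Phi}^{\#}\pi_{\mathcal{X}}(\widehat{x}(t))V_{\Phi}u_{t}\,dt=\int_{\mathcal{G}}\Phi(\widehat{x}(t))u_{t}\,dt=\widetilde{\Phi}(\widehat{x}),
\]
which is the displayed formula of the statement. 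The $\widetilde{\varphi}$-map property $\widetilde{\Phi}(\widehat{x})^{\#}\widetilde{\Phi}(\widehat{y})=\widetilde{\varphi}(\langle\widehat{x},\widehat{y}\rangle)$ follows from the central identity above by sandwiching with $V_{\Phi}^{\#}(\cdot)V_{\Phi}$ and using the coisometry $W_{\Phi}^{\#}W_{\Phi}=\mathrm{id}_{\mathcal{K}_{2}}$ together with $\widetilde{\varphi}=V_{\Phi}^{\#}\widehat{\pi_{\varphi}}(\cdot)V_{\Phi}$. Uniqueness of $\widetilde{\Phi}$ is automatic: every $\widetilde{\varphi}$-map satisfies the same norm bound and $C_{c}(\mathcal{G},\mathcal{X})$ is dense in $\mathcal{G}\times_{\eta}\mathcal{X}$.

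The main obstacle is the bookkeeping in the double-integral computation: one must correctly reconcile the indefinite involution $\#$ with the ordinary adjoint via $v_{t}^{\ast}=v_{t^{-1}}=v_{t}^{\#}$ and $J_{4}=\mathrm{id}$, then match the inner integrand with \eqref{eqn:inner product of closed product}. Once this pattern, already implicit in \cite[Theorem 4.3]{HJ} and in the $v$-covariance argument of Theorem \ref{main2}, is executed, the remaining assertions follow from pieces already in place.
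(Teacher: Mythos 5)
Your proposal is correct and follows essentially the same route as the paper: define $\widehat{\pi_{\mathcal{X}}}(\widehat{x})=\int_{\mathcal{G}}\pi_{\mathcal{X}}(\widehat{x}(t))v_t\,dt$ as a weak integral, establish $\widehat{\pi_{\mathcal{X}}}(\widehat{x})^{\#}\widehat{\pi_{\mathcal{X}}}(\widehat{y})=\widehat{\pi_{\varphi}}(\langle\widehat{x},\widehat{y}\rangle)$ via the covariance of $\pi_{\varphi}$, left invariance and Fubini, then obtain $\widetilde{\Phi}=W_{\Phi}^{\#}\widehat{\pi_{\mathcal{X}}}(\cdot)V_{\Phi}$ from $V_{\Phi}u_t=v_tV_{\Phi}$ and the coisometry property of $W_{\Phi}$. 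The only differences are cosmetic: you run the central double-integral computation from left to right rather than expanding $\widehat{\pi_{\varphi}}(\langle\widehat{x},\widehat{y}\rangle)$ as the paper does, and you make the uniqueness/density remark explicit.
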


\begin{proof}
Let $\left( \pi _{\mathcal{X}},\pi _{\varphi },V_{\Phi },W_{\Phi
},v,v^{\prime },\left( \mathcal{K}_{1},J_{3}\right) ,\left( \mathcal{K}%
_{2},J_{4}\right) \right) $ be the covariant KSGNS construction associated
to $\Phi $. Let $\widehat{x}\in C_{c}(\mathcal{G},\mathcal{X})$. Since
\begin{align*}
\left\vert \left\langle \int_{\mathcal{G}}\pi _{\mathcal{X}}\left(
\widehat{x}\left( t\right) \right) v_{t}\xi dt,\zeta \right\rangle
\right\vert & =\int_{\mathcal{G}}\left\vert \left\langle \pi _{%
\mathcal{X}}\left( \widehat{x}\left( t\right) \right) v_{t}\xi ,\zeta
\right\rangle \right\vert dt\\
&\leq C(\widehat{x})\int_{\mathcal{G}%
}\left\Vert \widehat{x}\left( t\right) \right\Vert \left\Vert \xi
\right\Vert \left\Vert \zeta \right\Vert dt \\
& \leq C(\widehat{x})\left\Vert \xi \right\Vert \left\Vert \zeta \right\Vert
\left\Vert \widehat{x}\right\Vert _{1}
\end{align*}%
for all $\xi \in \mathcal{K}_{1}$, $\zeta \in \mathcal{K}_{2}$ and for some
constant $C(\widehat{x})>0$, and also for any $x,y\in \mathcal{X}$, $a\in %
\mathcal{A}$ and $\xi ,\zeta \in \mathcal{H}_{1}$, we obtain that $\pi _{%
\mathcal{X}}(\widehat{x}(t))v_{t}(a\otimes \xi )=\Phi (\widehat{x}(t)\beta
_{t}^{\eta }(a))u_{t}\xi $ and
\begin{align*}
\Phi (\widehat{x}(t)\beta _{t}^{\eta }(a))^*\Phi (y)& =J_{1}\Phi (%
\widehat{x}(t)\beta _{t}^{\eta }(a))^{\#}\Phi (y)\\
&=J_{1}\varphi (\langle
\widehat{x}(t)\beta _{t}^{\eta }(a),y\rangle ) \\
& =J_{1}\varphi (\beta _{t}^{\eta }(a)^{\#}\alpha (\langle \widehat{x}%
(t),y\rangle ))\\
&=V_{\Phi }^{\#}\pi _{\varphi }(\beta _{t}^{\eta }(a))^*\pi _{\varphi }(\alpha (\langle \widehat{x}(t),y\rangle ))
\end{align*}%
and so we obtain
\begin{align*}
\left\langle \pi _{\mathcal{X}}(\widehat{x}(t))v_{t}(a\otimes \xi ),\Phi
(y)\zeta \right\rangle & =\left\langle u_{t}\xi ,\Phi (\widehat{x}(t)\beta
_{t}^{\eta }(a))^*\Phi (y)\zeta \right\rangle \\
& =\left\langle u_{t}J_{1}\xi ,J_{1}V_{\Phi }^{\#}\pi _{\varphi }(\beta
_{t}^{\eta }(a))^*\pi _{\varphi }(\alpha (\langle \widehat{x}%
(t),y\rangle ))\zeta \right\rangle \\
& =\left\langle a\otimes \xi ,v_{t}^*\pi _{\varphi }(\alpha (\langle
\widehat{x}(t),y\rangle ))\zeta \right\rangle .
\end{align*}%
Therefore, there exists an element $\widehat{\pi _{\mathcal{X}}}\left(
\widehat{x}\right) \in \mathcal{L}(\mathcal{K}_{1},\mathcal{K}_{2})$ such
that
\begin{equation*}
\widehat{\pi _{\mathcal{X}}}\left( \widehat{x}\right) =\int_{%
\mathcal{G}}\pi _{\mathcal{X}}\left( \widehat{x}\left( t\right) \right)
v_{t}dt.
\end{equation*}%
In this way, we obtain a map $\widehat{\pi _{\mathcal{X}}}:C_{c}(\mathcal{G},%
\mathcal{X})\to \mathcal{L}(\mathcal{K}_{1},\mathcal{K}_{2})$ being
extended by continuity to a continuous map $\widehat{\pi _{\mathcal{X}}}:%
\mathcal{G}\times _{\eta }\mathcal{X}\to \mathcal{L}(\mathcal{K}_{1},%
\mathcal{K}_{2})$. Moreover, by applying
\eqref{eqn:inner product of closed
product}, the covariance property of $\pi _{\varphi }$ and the Fubini's
theorem, we observe that
\begin{align*}
\widehat{\pi _{\varphi }}\left( \left\langle \widehat{x},\widehat{y}%
\right\rangle \right) & =\int_{\mathcal{G}}\pi _{\varphi }\left(
\left\langle \left\langle \widehat{x},\widehat{y}\right\rangle \left(
t\right) \right\rangle \right) v_{t}dt=\int_{\mathcal{G}%
}\int_{\mathcal{G}}\pi _{\varphi }\left( \beta _{s^{-1}}^{\eta
}\left( \left\langle \widehat{x}(s),\widehat{y}\left( st\right)
\right\rangle \right) \right) v_{t}dsdt \\
& =\int_{\mathcal{G}}\int_{\mathcal{G}}v_{s^{-1}}\pi
_{\varphi }\left( \left\langle \widehat{x}(s),\widehat{y}\left( st\right)
\right\rangle \right) \left( v_{s^{-1}}\right) ^{\#}v_{t}dsdt \\
& =\int_{\mathcal{G}}\int_{\mathcal{G}}v_{s}^*\pi _{%
\mathcal{X}}\left( \widehat{x}(s)\right) ^{\#}\pi _{\mathcal{X}}\left(
\widehat{y}\left( st\right) \right) v_{st}dsdt \\
& =\int_{\mathcal{G}}\int_{\mathcal{G}}J_{3}v_{s}^*\pi
_{\mathcal{X}}\left( \widehat{x}(s)\right)^*J_{4}\pi _{\mathcal{X}%
}\left( \widehat{y}\left( g\right) \right) v_{g}dgds \\
& =J_{3}\widehat{\pi _{\mathcal{X}}}\left( \widehat{x}\right)^*J_{4}%
\widehat{\pi _{\mathcal{X}}}\left( \widehat{y}\right) \\
& =\widehat{\pi _{\mathcal{X}}}\left( \widehat{x}\right) ^{\#}\widehat{\pi _{%
\mathcal{X}}}\left( \widehat{y}\right)
\end{align*}%
for all $\widehat{x},\widehat{y}\in C_{c}(\mathcal{G},\mathcal{X})$, $%
\widehat{\pi _{\mathcal{X}}}$ is a $\widehat{\pi _{\varphi }}$-representation of $\mathcal{G}\times _{\eta }\mathcal{X}$ on the Krein spaces $\left( \mathcal{K}_{1},J_{1}\right) $ and $\left( \mathcal{K}%
_{2},J_{2}\right) $. Consider the map $\widetilde{\Phi }:\mathcal{G}\times
_{\eta }\mathcal{X}\to \mathcal{L}(\mathcal{H}_{1},\mathcal{H}_{2})$
defined by
\begin{equation*}
\widetilde{\Phi }\left( z\right) =W_{\Phi }^{\#}\widehat{\pi _{\mathcal{X}}}%
\left( z\right) V_{\Phi }.
\end{equation*}%
Then for all $\widehat{x}\in C_{c}(\mathcal{G},\mathcal{X})$ we have
\begin{align*}
\widetilde{\Phi }\left( \widehat{x}\right) & =W_{\Phi }^{\#}\widehat{\pi _{%
\mathcal{X}}}\left( \widehat{x}\right) V_{\Phi }=\int_{\mathcal{G}%
}W_{\Phi }^{\#}\pi _{\mathcal{X}}\left( \widehat{x}\left( t\right) \right)
v_{t}V_{\Phi }dt=\int_{\mathcal{G}}W_{\Phi }^{\#}\pi _{\mathcal{X}%
}\left( \widehat{x}\left( t\right) \right) V_{\Phi }u_{t}dt \\
& =\int_{\mathcal{G}}\Phi \left( \widehat{x}\left( t\right) \right)
u_{t}dt.
\end{align*}%
on the other hand, for any $z_{1},z_{2}\in \mathcal{G}\times _{\eta }%
\mathcal{X}$, we obtain that
\begin{align*}
\widetilde{\Phi }\left( z_{1}\right) ^{\#}\widetilde{\Phi }\left(
z_{2}\right) & =V_{\Phi }^{\#}\widehat{\pi _{\mathcal{X}}}\left(
z_{1}\right) ^{\#}W_{\Phi }W_{\Phi }^{\#}\widehat{\pi _{\mathcal{X}}}\left(
z_{2}\right) V_{\Phi }=V_{\Phi }^{\#}\widehat{\pi _{\mathcal{X}}}\left(
z_{1}\right) ^{\#}\widehat{\pi _{\mathcal{X}}}\left( z_{2}\right) V_{\Phi }
\\
& =V_{\Phi }^{\#}\widehat{\pi _{\varphi }}\left( \left\langle
z_{1},z_{2}\right\rangle \right) V_{\Phi }=\int_{\mathcal{G}}V_{\Phi
}^{\#}\pi _{\varphi }\left( \left\langle z_{1},z_{2}\right\rangle (t)\right)
v_{t}V_{\Phi }dt \\
& =\int_{\mathcal{G}}V_{\Phi }^{\#}\pi _{\varphi }\left( \left\langle
z_{1},z_{2}\right\rangle (t)\right) V_{\Phi }u_{t}dt=\int_{\mathcal{G}%
}\varphi \left( \left\langle z_{1},z_{2}\right\rangle (t)\right) u_{t}dt \\
& =\widetilde{\varphi }\left( \left\langle z_{1},z_{2}\right\rangle \right)
\end{align*}%
and so we conclude that $\widetilde{\Phi }$ is a $\widetilde{\varphi }$-map.
\end{proof}


\begin{rem}
Suppose that $\left( \pi _{\mathcal{X}},\pi _{\varphi },V_{\Phi },W_{\Phi
},v,v^{\prime },\left( \mathcal{K}_{1},J_{3}\right) ,\left( \mathcal{K}%
_{2},J_{3}\right) \right) $ is the minimal covariant KSGNS construction
associated to $\Phi $. Then one can easily conclude that $\left( \widehat{\pi _{\mathcal{X}}},V_{\Phi
},W_{\Phi },\left( \mathcal{K}_{1},J_{3}\right) ,\left( \mathcal{K}%
_{2},J_{4}\right) \right) $ is the minimal KSGNS construction associated to $%
\widetilde{\Phi }$.
\end{rem}

\bigskip


\end{document}